\title{Satisfiability in Łukasiewicz logic and its unbounded relative}
\author{Zuzana Haniková\footnote{Institute of Computer Science, Czech Academy of Sciences,
Pod Vodárenskou věží 271/2
182 00 Prague 8
Czech Republic.} 
\and
Filip Jankovec\footnotemark[1] \footnote{Department of Algebra, Faculty of Mathematics and Physics, Charles University,
Sokolovská 49/83, 
186 75 Prague 8
Czech Republic.}}
\newtheorem{theorem}{Theorem}[section]
\newtheorem{lemma}[theorem]{Lemma}
\newtheorem{corollary}[theorem]{Corollary}
\newcommand{\mathrmL}{{\mathchoice{\mbox{\rm\L}}{\mbox{\rm\L}}{\mbox{\rm\scriptsize\L}}{\mbox{\rm\tiny\L}}}}
\renewcommand{\phi}{\varphi}
\newcommand{\langg}[1]{\ensuremath{\mathcal #1}}
\newcommand{\langAb}{\ensuremath{\langg{L}_{{\mathrm{Ab}}}}}
\newcommand{\langpAb}{\ensuremath{\langg{L}_{{\mathrm{pAb}}}}}
\newcommand{\langMV}{\ensuremath{\langg{L}_{{\mathrm{MV}}}}}
\newcommand{\langMVhalf}{\ensuremath{\langg{L}_{{\mathrm{MV[\frac{1}{2}]}}}}}
\newcommand{\formulas}{\ensuremath{\mathit{Tm}}}
\newcommand{\formAb}{\ensuremath{\formulas_{{\mathrm{Ab}}}}}
\newcommand{\formpAb}{\ensuremath{\formulas_{{\mathrm{pAb}}}}}
\newcommand{\formMV}{\ensuremath{\formulas_{{\mathrm{MV}}}}}
\newcommand{\formMVhalf}{\ensuremath{\formulas_{{{\mathrm{MV[\frac{1}{2}]}}}}}}
\newcommand{\alg}[1]{{\ensuremath{\boldsymbol {\mathit{#1}}}}}
\newcommand{\R}{\ensuremath{\alg{R}}}
\newcommand{\N}{\ensuremath{\mathbb{N}}}
\newcommand\standardL{\ensuremath{{[0,1]_\mathrmL}}}
\newcommand\standardLhalf{\ensuremath{[0,1]_{\mathrmL}^{{\nicefrac{1}{2}}}}}
\newcommand\depth[1]{\ensuremath{\mathrm{depth}(#1)}}
\newcommand{\tuple}[1]{\ensuremath{\langle{#1}\rangle}}  
\newcommand{\etau}{\tau'}
\newcommand{\esigma}{\sigma'}
\newcommand{\edelta}{\delta'}
\newcommand{\atoms}[1]{{\mathrm{At}^\sharp(#1)}} 
\newcommand{\boland}{\hspace{0.9pt}\mathop{\wedge\kern-8.7pt\wedge}\hspace{0.9pt}}
\newcommand{\foo}{\mathop{\bigwedge\kern-11pt\bigwedge}} 
\newcommand{\boor}{\hspace{0.9pt}\mathop{\vee\mbox{}\kern-8.7pt\vee}\hspace{0.9pt}}
\newcommand{\doo}{\mathop{\bigvee\kern-11pt\bigvee}}
\newcommand{\boneg}{\text{\raisebox{-1.1pt}{$\neg$}\mbox{}\kern-6.86pt\raisebox{0.9pt}{$\neg$}\hspace{0.9pt}}}
\begin{document}

\maketitle

\begin{abstract}
Unbounded Łukasiewicz logic is a substructural logic that combines features of infinite-valued Łukasiewicz logic with those of abelian logic. The logic is finitely strongly complete w.r.t.~the add\-itive $\ell$-group on the reals expanded with a distinguished element $-1$. We show that the existential theory of this structure is NP-complete. This provides a complexity upper bound for the set of theorems and the finite consequence relation of unbounded Łukasiewicz logic. The result is obtained by reducing the problem to the existential theory of the MV-algebra on the reals, the standard semantics of Łukasiewicz logic. This provides a new connection between both logics. The result entails a translation of the existential theory of the standard MV-algebra into itself.
\end{abstract}

\noindent{\small{\bf Keywords:} unbounded Łukasiewicz Logic,
Łukasiewicz Logic,
Abelian Logic,
existential Theory,
computational Complexity,
NP-completeness}

\section{Introduction}
\label{s:intro}

In the realm of nonclassical logics, and particularly in its subarea known as many-valued logics, the infinite-valued {\L}ukasiewicz logic $\mathrmL$ has long occupied a prominent position. Its evolution (discussed in detail in,  e.g., \cite{Cignoli-Ottaviano-Mundici:AlgebraicFoundations, Mundici:Advanced, Hajek:1998}) spanned the introduction of its three-valued antecedent by {\L}ukasiewicz in his famous analysis of future contingents \cite{Lukasiewicz:onDeterminism}, as well as the infinite-valued calculus  also introduced by {\L}ukasiewicz and elaborated by himself and Tarski in the paper \cite{Lukasiewicz-Tarski:Untersuchungen}, which first discussed, but did not prove, completeness w.r.t.~the intended semantics.
A completeness theorem was given much later by Rose and Rosser \cite{Rose-Roser:Fragments}.  MV-algebras, the equivalent algebraic semantics, were introduced and analyzed by Chang, and completeness was proved \cite{Chang:MVAlgebras,Chang:ANewProof}.  
Numerous monographs deal exclusively or substantially in the topic, 
e.g., \cite{Cignoli-Ottaviano-Mundici:AlgebraicFoundations,Hajek:1998,Metcalfe-Olivetti-Gabbay:ProofTheoryFuzzy,Mundici:Advanced,HandbookMFL-IandII}.   
Moreover, {\L}ukasiewicz logic and its extensions can fruitfully be investigated in the framework of substructural logics \cite{Restall:Substructural,Galatos-JKO:ResiduatedLattices}.

Increasingly then, the well-established area of  {\L}ukasiewicz logic and MV-algebras has been taken  as a vantage point  for various theoretical or applied problems, 
be it expansions of its language  with constants in the pioneering works of Pavelka \cite{Pavelka:Fuzzy},  modal expansions
  \cite{Fitting:MV-ModalLogics,Fitting:MV-ModalLogicsII,Hajek-Harmancova:MVModalLogic,Vidal:UndecidabilityNonAxModalMVL}, or
reasoning about probabilities \cite{Flaminio-Montagna:ConditionalProbability}, to name a few.

Mundici initiated the study of computational complexity problems raised by {\L}ukasiewicz logic in his famous work \cite{Mundici:Satisfiability} which showed  satisfiability and positive satisfiability of terms in the MV-algebra on the real unit interval with the natural order, $\standardL$,  to be NP-complete. An earlier important work, relevant to many subsequent contributions to
the area of algorithmic problems in {\L}ukasiewicz logic, came in McNaughton's work \cite{McNaughton:FunctionalRep} that characterized term-definable functions in $\standardL$ as continuous, piecewise linear functions with integer coefficients.  Algebraic methods are the mainstay of studying computational problems in the area, see especially \cite{Aguzzoli:PhD, Aguzzoli-Ciabattoni:Finiteness, Aguzzoli-Gerla:finitereductions, Aguzzoli:asymptoticBoundLuk}.
Komori's classification of subvarieties of MV \cite{Komori:SuperLukasiewiczImplicational,Komori:SuperLukasiewiczPropositional} yielded the coNP-completeness result for axiomatic extensions of {\L} \cite{Cintula-Hajek:ComplexityLukasiewicz}, 
and Mundici's NP-completeness result for term satisfiability generalizes to the existential theory of $\standardL$  \cite{Hanikova:Handbook}.
While Mundici's results place the satisfiability and the tautology problems 
in $\mathrmL$  on a par with their counterparts in classical propositional logic (either being NP-complete and coNP-complete respectively), this is not a given in the infinite-valued setting.
More generally, {\L}ukasiewicz logic is  dissimilar from classical logic in many respects and  a list to showcase this phenomenon ought not to omit $\mathrmL$ not being structurally complete \cite{Dzik:UnificationBLHoops}, the admissible rules being PSPACE complete \cite{Jerabek:ComplexityAdmissibleRulesLuk}, the first-order standard tautologies
not being recursively enumerable \cite{Scarpellini:PredikLukNonComplete} and in fact $\Pi_2$-complete in the arithmetical hierarchy \cite{Ragaz:PhD}, or the complex structure of  the lattice of extensions of {\L}, which is dually isomorphic to the lattice of subquasivarieties of the variety of MV-algebras. 
This lattice is Q-universal \cite{AdamsDziobiak:Q-universal}. Consequently, the extensions cannot all be decidable for cardinality reasons; 
this is also the case already for term satisfiability problems for various MV-algebras on the rational domain \cite{Hanikova-Savicky:SAT}.  

Unbounded {\L}ukasiewicz logic $\textrm{{\L}u}$ was considered recently in the papers \cite{Cintula-GNS:DegreesToEleven,Cintula-Jankovec-Noguera:SuperabelianLogics}. Formally, it is obtained by expanding the language of lattice-ordered abelian groups with a propositional constant $-1$ with suitable axioms ensuring that in any algebra within its equivalent algebraic semantics, the upset of the interpretation of $0$ is the set of designated elements, representing truth, whereas the downset of $-1$ represents falsity.
By design, then, the logic bears resemblance to both {\L}ukasiewicz logic and to abelian logic,
as considered in Meyer and Slaney's paper  \cite{Meyer-Slaney:AbelianLogic}.   Unlike {\L}, and like abelian logic,  
\textrm{{\L}u} is inconsistent with classical logic.
 
Weispfenning \cite{Weispfenning:ComplexityAbelianLGroups} proved that the existential theory of the variety  of $\ell$-groups is NP-complete. So the universal theory of  $\ell$-groups is coNP-complete and \emph{a fortiori}, the quasiequational theory and the equational theory are in coNP. It is also known that $\ell$-groups are generated as a quasivariety by the totally ordered members \cite{Clifford:PartiallyOrderedAbelianGroups}, in fact, by any nontrivial totally ordered member, since the universal theories of each two nontrivial totally ordered abelian groups coincide \cite{Gurevich-Kokorin:UniversalEquivalence}. One easily shows that the equational theory of the totally ordered additive group on the integers is coNP-hard, thus any of the above universal fragments is coNP-complete.

The purpose of this paper is to provide a complexity estimate for 
\textrm{{\L}u} by exhibiting a polynomial-time reduction of the existential theory of the intended semantics of \textrm{{\L}u}---the algebra $\R_{-1}$, the pointed abelian totally ordered additive group on the reals where ${-1}$ is interpreted by itself---to the existential theory of the intended semantics of \textrm{{\L}}, namely $\standardL$, the standard MV-algebra on the reals. 
In both cases, the logics are finitely strongly complete w.r.t.~the intended semantics. 
In our opinion the method surpasses the result in terms of interest, since the result might be anticipated.  
The method consists in taking a linear map of a sufficiently large neighbourhood of the element $0$ to the interval $[0,1]$, with the coefficient of the linear map  determined by the instance (i.e., a given existential sentence).  

In combination with the extant method of \cite{Cintula-Jankovec-Noguera:SuperabelianLogics} that reduces in polynomial time the set of valid universal formulas (valid identities) of $\standardL$ to the set of valid universal formulas (valid identities respectively) of the algebra $\R_{-1}$ one obtains firstly, a coNP-completeness of each of the universal fragments (i.a., the logic  \textrm{{\L}u} is coNP-complete) and secondly, a non-trivial interpretation of the existential theory of the 
standard MV-algebra in itself.

The result invites a comparison with the work of Marchioni and Montagna \cite{Marchioni-Montagna:ComplexityDefinabilityLPi}, which concerns the logic $\text{\L}\Pi\nicefrac{1}{2}$ and its relation to the universal theory of real closed fields (RCF). 
The authors show that the universal fragment of RCF has a polynomial time reduction
to the (theorems of) the logic $\text{\L}\Pi\nicefrac{1}{2}$.
The relevance consists in, on the one hand, the logic $\text{\L}\Pi\nicefrac{1}{2}$
being a conservative expansion of the logic {\L} (the theorems of {\L} are exactly those theorems of $\text{\L}\Pi\nicefrac{1}{2}$ that only use the language $\langMV$) and on the other hand, the universal theory of pointed abelian totally ordered group being term-equivalent to the multiplication- and division-free fragment of the universal theory  of  RCF. 
Their reduction starts with fixing a bijection between $\R$ and the interval $(0,1)$ and then defining the operations of the real closed field on $\R$ in the  $\text{\L}\Pi\nicefrac{1}{2}$-algebra on the reals. This approach cannot be used in the present case because
it relies on the multiplicative function symbols of the logic $\Pi$ even when defining  the real addition; so it does not scale well to language fragments.

This paper is structured as follows. Key notions are presented in section \ref{s:prelim}. Section \ref{s:bounded} establishes that assignments that make an open formula of the language of pointed $\ell$-groups true, if any, can be found in a suitably bounded interval. Section \ref{s:reduction} presents the main result, the polynomial-time reduction. Section 
\ref{s:embeddings} then composes two reductions in order to obtain a nontrivial reduction
of the existential theory of $\standardL$ into itself. 

\section{Preliminaries}
\label{s:prelim}

Let $\mathrm{Var}=\{x_1, x_2, \dots\}$ be a denumerable set of variables. 
A first-order language $\langg{L}$ uniquely determines the set $\formulas_{\langg{L}}$ of terms of $\langg{L}$. The logical symbol $=$ is the only predicate symbol of any algebraic language; however, we may additionally use binary predicate symbols $\leq$, $<$ (in Section \ref{s:bounded}). If $\phi$ and $\psi$ are terms of $\langg{L}$, then $\phi \diamond \psi$ is 
an atomic formula (or ``atom'') of $\langg{L}$ for $\diamond\in\{=,\leq,<\}$.
 If there are no predicate symbols in $\langg{L}$ then $\langg{L}$-atoms are just $\langg{L}$-identities $\phi=\psi$.
 We use lowercase Greek for terms and uppercase Greek for first-order formulas.
For boolean symbols occurring in first-order formulas (boolean combinations
of atoms of $\langg{L}$) we use
 $\boneg$ for boolean negation, 
 $\boland$ and $\boor $ for boolean conjunction and disjunction respectively, 
 $\foo$ and $\doo$ for finite boolean conjunctions and disjunctions respectively.
An open formula of $\langg{L}$ is an arbitrary finite boolean combination (i.e., 
a $ \{ \boneg,\boland,\boor \} $-combination)  of $\langg{L}$-atoms; we do not in general assume a normal form for first-order boolean formulas.
Moreover, if $\Rightarrow$ denotes  boolean implication, 
quasiidentities of $\langg{L}$ are formulas of the form $\foo_{i=1}^n (\phi_i=\psi_i) \Rightarrow (\phi_0=\psi_0)$ where $\{\phi_i,\psi_i\}_{0\leq i\leq n}$ is a set of $\langg{L}$-terms. 
Since quasiequational theories are not of particular interest in this work, $\Rightarrow$ is not taken as primitive and a formula $\Phi\Rightarrow\Psi$
is understood as $\boneg\Phi\boor\Psi$  with only an insubstantial increase in length.
An existential (universal) sentence of a language $\langg{L}$ is an existential (universal respectively) 
closure of an open formula of $\langg{L}$.  

If $\phi$ is an $\langg{L}$-term, $\mathrm{Var}(\phi)$ denotes the set of variables that occur in $\phi$, and analogously for $\mathrm{Var}(\Phi)$ if $\Phi$ is an  $\langg{L}$-formula. 
We use $\bar x$ for  $\tuple{x_1,\dots,x_n}$ if $n$ is of no consequence.

The following languages will be used:\footnote{We write $\formAb$ instead of $\formulas_{\langAb}$. We use the same notation for a function symbol of a language and for its interpretation in a particular algebra where possible, to avoid cluttering.}  
\begin{itemize}
\item $\langAb = \tuple{+,-,0,\land,\lor}$ is the language of abelian lattice-ordered groups ($\ell$-groups) and $\formAb$ is the corresponding set of well-formed terms;
\item $\langpAb =\tuple{+,-,0,-1,\land,\lor}$ is the language of pointed $\ell$-groups and
$\formpAb$ the set of its well-formed terms;
\item $\langMV = \tuple{\oplus,\otimes,\neg,\rightarrow,0,1,\land,\lor}$ is the language of MV-algebras and $\formMV$ the set of its well-formed terms;
\item $\langMVhalf = \tuple{\oplus,\otimes,\neg,\rightarrow,0,1,\nicefrac{1}{2},\land,\lor}$ expands the language of MV-algebras with a new constant $1/2$ and $\formMVhalf$ is the set of its well-formed terms.
\end{itemize}

$\R=\tuple{\mathbb R,+,-,\land,\lor,0}$ is an abelian lattice-ordered group ($\ell$-group) on the real numbers with the usual order, (the usual) addition, subtraction and $0$. The algebra $\R$ provides a semantics to the language $\langAb$. 

$\R_{-1}=\tuple{\mathbb R,+,-,\land,\lor,0,-1}$ is a \textit{pointed} abelian $\ell$-group whose $\langAb$-reduct is $\R$ and the element $-1$ interprets the constant $-1$.\footnote{While the algebra $\R_{-1}$ seems very similar to $\R$, the additional constant in the language grants much more expressivity and brings the logic $\textrm{{\L}u}$ closer to the logic {\L}.}

In the \emph{standard MV-algebra} $\standardL$ on the reals, which provides an interpretation to the language $\langMV$, the domain is $[0,1]$, as for the basic operations, $\neg a$ is $1-a$ and $a\otimes b$ is $\max(0,a+b-1)$ for $a,b\in[0,1]$. The remaining function symbols are definable. 
\setlength{\multicolsep}{5pt} 
\begin{multicols}{2}
\begin{itemize}
    \item $a\oplus b$ is $\neg(\neg a \otimes \neg b)$
    \item $a\to b$ is $\neg a \oplus b$
    \item $a\land b$ is $a\otimes (a\to b)$
    \item $a\lor b$ is $(a\to b)\to b$
    \item $0$ is $a\otimes \neg a$ for some $a$ 
    \item $1$ is $\neg 0$
   \end{itemize}
\end{multicols}
In any MV-algebra, $\neg\neg a = a$ holds (involutive law), analogously in any abelian $\ell$-group, we have $-\!-a=a$. In view of this, we assume that terms are given in a reduced form within their respective languages, containing no subterms of the form $\neg \neg\phi$ or $-\!-\phi$, respectively.

The two element boolean algebra is a subalgebra of $\standardL$.
The algebra $\standardLhalf$ interprets the language $\langMVhalf$. Its $\langMV$-reduct is the algebra $\standardL$ and the element $\nicefrac{1}{2}$ interprets the constant $\nicefrac{1}{2}$.

It is assumed throughout that no well-formed term of the languages under consideration contains two consecutive occurrences of the symbol $-$ or the symbol $\neg$. This can be assumed without a loss of generality, relying on the valid involutive laws.  

One can introduce the (infinite-valued) {\L}ukasiewicz logic $\mathrmL$ 
and the unbounded {\L}u\-ka\-sie\-wicz logic $\mathrmL\mathrm{u}$ axiomatically and then obtain 
that $\mathrmL$ is finitely strongly complete w.r.t.~$\standardL$ and 
$\mathrmL\mathrm{u}$ is finitely strongly complete w.r.t.~$\R_{-1}$.
We take these results for granted, referring the reader to the works \cite{Cignoli-Ottaviano-Mundici:AlgebraicFoundations,  Hajek:1998,Cintula-Jankovec-Noguera:SuperabelianLogics}. Moreover, without further ado we use Mundici's result on the NP-completeness of the term satisfiability problem in $\standardL$ and the coNP-completeness of term validity therein.
Recall that the designated element of $\standardL$ is $1$; accordingly, a term $\phi(\bar x)$ holds under  an assignment $\bar a$ provided that $\phi(\bar a)= 1$,
and $\phi(\bar x)$ is \emph{satisfiable} in $\standardL$ provided that an assignment exists that makes it true there. 
We retain this terminology also for first-order open formulas: such a formula $\Phi(\bar x)$ is satisfiable in $\standardL$ provided  an assignment to its variables exists that makes it true therein.
A term $\phi$ of $\langMV$ is \emph{valid} in $\standardL$ provided it is true under any assignment therein.
The set of designated elements of $\R_{-1}$ is the set of its nonnegative elements (the upset of $0$), 
and accordingly a term is satisfiable in $\R_{-1}$ provided some assignment sends it to a nonnegative element; the rest is analogous.

If $\langg{L}$ is a language and $\alg{A}$ is an $\langg{L}$-algebra, 
the existential theory ($\exists$-theory) of $\alg{A}$ is the set of all existential $\langg{L}$-sentences valid in $\alg{A}$; dually for the universal theory ($\forall$-theory) and universal $\langg{L}$-sentences. 
The (quasi)equational theory of $\alg{A}$ is the set of universal closures of $\langg{L}$-(quasi)identities
valid in $\alg{A}$.
Both the equational theory and the quasiequational theory of an algebra $\alg{A}$ are fragments of the universal theory of $\alg{A}$.

Each term $\phi$ of $\langAb$ in $n$ variables $x_1,\dots,x_n$ in $\mathrm{Var}$ 
uniquely determines a function $f_\phi\colon \mathbb R^n\to \mathbb R$ (on the domain of the algebra $\R$) 
using the following inductive steps:
\begin{multicols}{2}
\begin{itemize}
    \item $f_x=x$ for each variable $x$ in $\mathrm{Var}$;
    \item $f_{\phi+\psi}=f_{\phi}+f_{\psi}$;   
    \item $f_{-\phi}=-f_{\phi}$;  
    \item $f_{\phi \land \psi}=\min(f_{\phi},f_{\psi})$;
    \item $f_{\phi \lor \psi}=\max(f_{\phi},f_{\psi})$;
    \item $f_0=0$.
\end{itemize}
\end{multicols}
Analogously for a term $\phi$ of $\langpAb$.
Furthermore, a term 
$\phi$ of $\langMV$ uniquely determines a function  $f_\phi\colon [0,1]^n\to [0,1]$ 
(on the domain of the standard MV-algebra), i.e., 
\begin{multicols}{2}
\begin{itemize}
   \item $f_{\phi \otimes \psi}=\max(0,f_{\phi}+f_{\psi}-1)$,
   \item $f_{\neg \phi}=1-f_{\phi}$,
\end{itemize}  
\end{multicols}
etc., and analogously for a term of the language $\langMVhalf$.

For a term $\phi$ of $\langAb$, the \emph{depth} of $\phi$ is defined by induction:
$\depth{0}=0$; $\depth{x}=0$ if $x\in\mathrm{Var}$; 
$\depth{-\phi}=\depth{\phi}+1$; and
$\depth{\phi\circ\psi}= \max( \depth{\phi},\depth{\psi} )+1$ if $\circ$ is one of $\{+,\land,\lor \}$. Analogously for the other languages, with depth of constants set to $0$.

For a term $\phi$  and a first-order formula $\Phi$ of $\langAb$,  
$\sharp(\phi)$ and $\sharp(\Phi)$ will denote the \emph{length} (or  \emph{size}) of $\phi$ and $\Phi$ respectively. 
We define $\sharp(\phi)$ as the total number of occurrences of variables and constants in $\phi$; should one view $\phi$  as a tree, $\sharp(\phi)$ is the number of leaves. Recalling that in $\phi$ no two consecutive negations `$-$' can occur, the number of occurrences of function symbols in $\phi$ (i.e., of the inner nodes of the tree) is bounded by $3\sharp(\phi)$.
The sizes of representations of each variable and each constant are neglected and representation  in unit length is considered.

For an open $\Phi$, we use $\atoms{\Phi}$ for  the number of occurrences of atomic formulas. 
We  define  $\sharp(\Phi)$ to be the sum of the sizes of all the occurrences of terms in the atomic identities; 
we have  $\sharp(\Phi) \leq \atoms{\Phi} \cdot 2 \max\{ \sharp(\phi) \mid \phi \text{ atom in } \Phi\}$.
Again any two consecutive boolean negations are omitted.  Analogous considerations apply to the remaining languages.

Let $\langg{L}$ be a language and $\Phi$ an open $\langg{L}$-formula.
A \emph{Tseitin variant} of $\Phi$ is an open formula $\Phi'$ with some desirable properties:
 $\sharp(\Phi')$ is of polynomial size in  $\sharp(\Phi)$; 
 every atom of  $\Phi'$ contains at most one function symbol (in particular, terms of $\Phi'$ have depth at most 1); 
 and $\Phi'$ is equisatisfiable with $\Phi$ in any $\langg{L}$-structure $\alg{M}$ (i.e., the existential closure of $\Phi$ holds in $\alg{M}$ iff so does the existential closure of $\Phi'$). We proceed to define Tseitin variants formally. 

Let $T$ be the set of all subterms of $\langg{L}$-terms that occur in $\Phi(x_1,\dots,x_n)$.
Let $|T|$ be the cardinality of $T$.
To each term $\phi$ in $T$ that is not a variable among $x_1,\dots, x_n$,
 assign a new variable $x_\phi$ from $\mathrm{Var}$, not occurring in $\Phi$, with  distinct variables  assigned to distinct terms.  
 We have $|T| \leq 4\sharp(\Phi)$.
Define a new open formula $\Psi$ from $\Phi$ as follows. For every atom $\phi \diamond \psi$ occurring in $\Phi$ (here $\diamond$ is among $\{=,<\}$),  if either of $\phi$ or $\psi$ is not a variable among $x_1,\dots, x_n$, replace this occurrence of $\phi$ or $\psi$ with the new variable $x_\phi$ or $x_\psi$ respectively. For convenience, if the term $\phi$ is a variable, $x_\phi$ denotes the original variable $\phi$.
Then the Tseitin variant of $\Phi$ is

$$ \Phi' \coloneqq \Psi \boland\foo_{\substack{\phi \text{ occurs in } \Phi  \\ \phi\text{ is } f(\phi_1,\dots,\phi_n) \text{ for some $n$-ary function symbol } f \in \langg{L} }} x_\phi=f(x_{\phi_1},\dots, x_{\phi_n}),$$ 
where $x_\phi, x_{\phi_1},\dots, x_{\phi_n}$ are metavariables for elements of $\mathrm{Var}$.
Observe $\Phi'$ has no compound terms. 
Furthermore, $\atoms{\Phi'} \leq\atoms{\Phi}+|T|$. 
Lastly, $\Phi'$ is indeed  equisatisfiable with $\Phi$ in any $\langg{L}$-structure $\alg{M}$: 
for any assignment $v_\alg{M}$ in $\alg{M}$ one has that $v_\alg{M}$ satisfies 
$\Phi$ in $\alg{M}$ if and only if $\Phi'$ is satisfied by the assignment $v_\alg{M}'$, where $v_\alg{M}'$ extends  $v_\alg{M}$ by sending each added variable $x_\phi$ to $v_\alg{M} (\phi)$.

\section{Bounded models}
\label{s:bounded}

This section establishes a technical result to be used below: 
if an existential sentence $\exists x_1 \exists x_2 \ldots \exists x_n \Phi$ 
in the language $\langpAb$ holds in $\R_{-1}$, then there is a satisfying (rational) assignment
whose binary representation has size polynomial in $\sharp(\Phi)$.
Our exposition follows (in some detail) the one in the paper \cite{Fagin-Halpern-Megiddo:LogicReasoningProb} as to the general method. 
  
\begin{lemma} \label{l:small solution linear programming}
    Let $k,m,n$ be positive natural numbers.
    Let $Ax \leq b$ be a system of linear inequalities with $A$  an $n \times m$ integer matrix, $b$ a rational $n$-tuple, and $a_{i,j},b_i \in [-k,k]$ for $i\leq n,j \leq m$. 
    Assume the system has a solution in $\mathbb R$. 
    Then there is a (rational) solution in the interval $[-(mk)^m,(mk)^m]$.
\end{lemma}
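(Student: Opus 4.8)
The plan is to bound a solution using the standard theory of vertices of polyhedra. The key observation is that if the system $Ax \leq b$ has a real solution, then the feasible region is a nonempty polyhedron, and I will argue that it must contain a point whose coordinates are controlled by the determinantal data of the system.

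First I would reduce to the case where the bound is achieved at a vertex (basic feasible solution). The feasible set $P = \{x \in \mathbb{R}^m : Ax \leq b\}$ is a nonempty polyhedron. If $P$ is pointed (has at least one vertex), then some vertex is a solution; if not, one restricts to a suitable face, but the cleanest route is to add the trivial bounding constraints and observe that any minimal face is determined by a subset of the inequalities turned into equalities. Concretely, a vertex $v$ of $P$ is obtained as the unique solution of a subsystem $A' x = b'$, where $A'$ is an invertible $m \times m$ submatrix of (rows of) $A$ and $b'$ the corresponding entries of $b$. By Cramer's rule, each coordinate $v_j = \det(A'_j)/\det(A')$, where $A'_j$ is $A'$ with its $j$-th column replaced by $b'$.

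Next I would estimate these determinants. Since $\det(A')$ is a nonzero integer (the entries $a_{i,j}$ are integers), we have $|\det(A')| \geq 1$. For the numerator, I would apply the Hadamard bound or a direct expansion: $|\det(A'_j)| \leq m! \cdot k^m$, since it is an $m \times m$ determinant whose entries lie in $[-k,k]$ (the entries of $b'$ are rational but I would first note one may clear denominators or, more simply, bound using the fact that each $b_i \in [-k,k]$ as well). A slightly coarser but sufficient bound gives $|v_j| \leq m! \, k^m \leq (mk)^m$, which is exactly the claimed interval $[-(mk)^m, (mk)^m]$. The rationality of the solution follows because $v$ is the solution of an integer (or rational) linear system, hence rational.

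The main obstacle I expect is the boundary case where $P$ has no vertex, i.e., $P$ contains a line (the constraint matrix $A$ does not have full column rank in the relevant sense). In that situation there is no basic feasible solution of the naive kind, and the Cramer's-rule argument does not apply directly. To handle this, I would intersect $P$ with coordinate hyperplanes or with additional bounding inequalities to cut down to a pointed polyhedron while preserving feasibility, then argue that the resulting vertex still satisfies the original system and respects the same determinantal bound. This is the step requiring care, and it is precisely where following the method of \cite{Fagin-Halpern-Megiddo:LogicReasoningProb} pays off: their treatment shows how to force the existence of a small basic solution even when the polyhedron is not pointed, by a perturbation or dimension-reduction argument that keeps all quantities integer and within the stated bounds.
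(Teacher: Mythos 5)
Your proposal is correct and follows essentially the same route as the paper: pass to a basic solution of a subsystem of tight constraints, apply Cramer's rule, and combine $|\det(A^0)|\geq 1$ (integrality) with the bound $d!\,k^d\leq (dk)^d\leq (mk)^m$. The one step you defer to the reference---the case where the polyhedron is not pointed---is resolved in the paper exactly as you guess: one fixes a minimal nonempty face, which is the solution set of a subsystem $\hat{A}x=\hat{b}$ of equalities, uses Gaussian elimination to isolate $d$ pivot columns, sets the remaining $m-d$ variables to $0$, and solves the resulting regular $d\times d$ system by Cramer's rule, so no perturbation argument is needed.
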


\begin{proof}
    The  polytope  $P=\{x \mid Ax\leq b\}$ is nonempty. 
    Fix a non-empty face of $P$ minimal under inclusion. 
    Such a minimal face is given by a system of equations $\hat{A} x=\hat{b}$, a subsystem of $Ax=b$. To solve this system, it is enough to solve a system  $A^0 y=b^0$, where $A^0$ and $b^0$ are obtained from $\hat{A}$ and $\hat{b}$ by fixing $d$ columns that contain pivots after Gauss elimination
    (permuting columns if necessary, assume the first $d$ columns of $A$) and fixing the corresponding $d$ linearly independent rows. The matrix $A^0$ is regular of rank $d$ and  $b^0$ is a $d$-tuple of rationals. A solution of $A^0 y=b^0$ also yields a solution of $\hat{A} x=\hat{b}$ by assigning the value $0$ to all
    variables $x_{d+1},\dots, x_m$. 
     Cramer's rule yields the solution 
     $$\tuple{y_i}_{i \leq d}=\tuple{\frac{\det(A^0_{i})}{\det(A^0)}}_{i\leq d}.$$ 
     Since $A$ has integer entries, $|\det(A^0)| \geq 1$ and thus  $|y_i| \leq |\det(A^0_i)|$ for each $i \leq d$.
    We have $|\det(A^0_i)| \leq d! \cdot k^d \leq d^d \cdot k^d=(kd)^d$.
    So $y_i \in [-(dk)^d,(dk)^d]$ for each $i \leq d$, hence for each $i \leq m$.
    Since $d \leq \min(m,n)$ we have $x_i \in [-(mk)^m,(mk)^m]$ for $i \leq m$.
\end{proof}

\begin{lemma}[Bounded assignment] \label{t:small models}
    There exists an integer $C$ such that for every open formula $\Phi(x_1,\dots,x_n)$ 
    of the language $\langpAb$ with $N=\sharp(\Phi)$, the following holds:

    If $\R_{-1}\models\exists x_1\dots\exists x_n\Phi$, then there are 
    real numbers $\tuple{a_1,\dots,a_n}$ and an integer $M$ such that 
    $\Phi(a_1,\dots,a_n)$ holds in $\R_{-1}$, 
    $-2^M \leq a_i\leq 2^M$ for each $1\leq i\leq n$,
    and $M \leq C N \log_2 N$.


\end{lemma}

\begin{proof} 
For convenience, the symbol and $<$ is added to the language $\langpAb$. 
Let $\Phi(x_1,\dots,x_n)$ be given. 
Applying de Morgan rules and removing any double boolean negations that occur, 
$\Phi$ can be rewritten so that any boolean negation is applied to an atom. Using trichotomy, any atom $\boneg(\phi=\psi)$ can be replaced with  $(\phi<\psi) \boor\, (\psi<\phi)$. 
This yields an equivalent negation-free formula $\Phi'(x_1,\dots,x_n)$ whose atoms are identities and strict inequalities. Moreover $\atoms{\Phi'}\leq 2\atoms{\Phi}$.

 $\Phi'$ can be  brought to a DNF without negations, obtaining a 
$$\Phi''=\Psi_1\boor\Psi_2\boor\dots\boor\Psi_m$$ for some $m$, where each $\Psi_j$ for $1\leq j\leq m$ is a conjunction of atoms of the form $(\phi=\psi)$ or $(\phi<\psi)$. 
The value of $m$ need not be polynomial in $\atoms{\Phi'}$.
The remaining part of the proof establishes that a bounded satisfying assignment to $\Phi$ exists 
on the basis of the existence of the formula $\Phi''=\mathrm{DNF}(\Phi')$ and the fact, which we proceed to show, 
of an existence of a bounded satisfying assignment to at least one among the $\Psi_j$.
Both $\Phi'$ and $\Phi''$ are equivalent to $\Phi$; namely, a tuple
$\tuple{a_1,\dots,a_n}$ makes $\Phi$ true in $\R_{-1}$ if and only if it makes $\Phi'$ true, and the latter is the case if and only if it makes some $\Psi_j$ true for some $j\leq m$. 
Trivially $\atoms{\Psi_j}\leq\atoms{\Phi'}$ for $1\leq j\leq m$.

Next, we remove any complex terms from $\Phi''$ using a Tseitin transformation. 
Let $\Psi'_1,\dots \Psi'_m$ be Tseitin variants of $\Psi_1,\dots \Psi_m$. Without loss of generality, we assume that the Tseitin transformation assigns the same variable $x_t$ to identical terms $t$, regardless of the disjunct in which they appear. Consequently, $|\bigcup_{j=1}^m {\mathrm Var}(\Psi'_j)|= |T|$, where $|T|$ is the cardinality of the set $T$ of all $\langpAb$-(sub)terms that occur in $\Phi'$.

Let us set
$$(\Phi''')=(\Psi_1' \boor \dots \boor \Psi_m').$$
Since $\Psi_j$ is equisatisfiable with $\Psi'_j$ for each $j\leq m$, we obtain that $\Phi''$ is equisatisfiable with $\Phi'''$.  
Let $k$ denote the total number of  variables in $\Phi'''$. Clearly, we have  $k = |T| \leq  4 \sharp(\Phi')$.

The formula $\Phi'''$ is still in DNF and we have $\atoms{\Psi_j'} \leq |T|+ \sharp(\Psi_j)\leq 5\sharp(\Phi')$ for each $j\leq m$.

By the assumption, there is an assignment $v$ in $\R_{-1}$ with $v(x_i)=a_i$ for $i\leq k$ such that the vector $\tuple{a_1,\dots, a_k}$  makes both $\Phi$ and $\Phi'''$ true in $\R_{-1}$.
Fix such an assignment $v$.
Then there is a total ordering given by a permutation $\sigma$ of $\{1,\dots,k\}$ such that, in $\R$, $a_{\sigma(i)} \leq a_{\sigma(i+1)}$ for each $i<k$.
Let
$$\rho (x_1, \dots, x_k) \coloneqq \foo_{{\{i<k\}; \; a_{\sigma(i)}<a_{\sigma(i+1)}}} (x_{\sigma(i)} < x_{\sigma(i+1)}) \boland \foo_{\{i<k\} ; a_{\sigma(i)}=a_{\sigma(i+1)}} (x_{\sigma(i)}=x_{\sigma(i+1)}).$$
We have  $\atoms{\rho} \leq k$.
For some $j\leq m $, $(\rho \boland \Psi_j')(a_1,\dots,a_k)$ is true in $\R_{-1}$.
 
Fix  such  $j\leq m$
and let us show that there exists an assignment that makes the formula $(\rho \boland \Psi_j')$ true $\R_{-1}$ and whose values are bounded as required. 
We refer to the formula $\Psi_j'$ simply as $\Psi$.

Using $\rho$, one can omit atoms with the function symbols $\land$ and $\lor$.
Indeed, since  $\rho\boland\Psi$ is true under $v$, 
it must also be the case that 
if, e.g., $x_\phi \land x_\psi = x_\chi$ occurs as an atom in $\Psi$
(which means either that the term $\chi$ is the term $\phi\land \psi$ in $\Phi$,
or that $\chi=\phi\land\psi$ is an identity in $\Phi$), 
then the identity $\phi(a)  \land \psi(a) = \chi(a)$ holds and hence, 
this is captured by $\rho$, namely $x_\chi =_\rho \min_\rho( x_\phi,x_\psi)$.
The other cases featuring $\lor$ are analogous. 
Thus $\rho$ already captures all the information
that is provided by those atoms in  $\rho \boland \Psi$ in
which the lattice function symbols $\land$ and $\lor$ 
occur, whereby these atomic formulas are redundant and can be omitted from the conjunction.

Such an omission yields a new conjunction of identities and strict inequalities 
$\rho \boland\Psi^\star$.
This formula has $k$ variables, and 
$$l\coloneqq \atoms{\rho \boland\Psi^\star} = 
\atoms{\rho} + \atoms{\Psi_j'}  \leq 
 \sharp(\Phi')   
+ 5\sharp(\Phi') 
=6\sharp(\Phi')\leq 12\sharp(\Phi).$$ 
Now $\rho \boland\Psi^\star$ defines a system $S$ of linear equations and strict 
inequalities in $\R$ as follows.
Any atomic formula in $\rho \boland\Psi^\star$ has at most three distinct variables 
and at most one function symbol. Subtracting the expression on right-hand side of each atomic
formula from both sides, we get a system consisting of a subsystem $Ax = c$ with $l_1$  rows,
and a subsystem $Bx <d$ with $l_2$  rows, both in the variables $x_1,\dots, x_k$ and with $l_1+l_2=l$, thus $l_1, l_2\leq 12\sharp(\Phi)$.
Since $\tuple{a_1,\dots,a_k}$ is a solution to the system,
there has to exist a rational vector $d'=\tuple {d_1',\dots,d_{l_2'}}$ where
$d_i-1\leq d_i'< d_i$, such that a modified system $S'$ consisting of $Ax = c$ and $Bx \leq d'$ still has a solution.
Clearly, any solution to $S'$ is also a solution to $S$.

Rewriting each of the equations in  $Ax = c$ as two inequalities and adding the rows
of $Bx \leq d'$ yields a system $ A' x \leq b' $,  
s.t.~there is an integer $c\leq 36$ with 
the number of both rows and columns in  $ A' x \leq b' $  bounded
by $C \cdot \sharp(\Phi)$.
$A'$ is an integer matrix, $b'$ is a rational vector, 
and the absolute values of entries in either are bounded by  $3$.
Applying Lemma~\ref{l:small solution linear programming} yields
a solution whose absolute value is bounded by  $(3C \sharp(\Phi))^{C\sharp(\Phi)}$.
It is then enough to pick $M = \lceil  C\sharp(\Phi) \log_2 (3C\sharp(\Phi)) \rceil$.
\end{proof}

Using the tools of this section, one might proceed to show that the existential theory of $\R_{-1}$ is in NP.
Indeed, one gets almost for free from the above, for any open  $\langpAb$-formula $\Phi$ with a real assignment that satisfies it, 
also a satisfying assignment that is rational with  both the numerators and the denominators  of polynomial size in $\sharp(\Phi)$. This assignment is then a witness of the fact that the existential closure of $\Phi$ is true, whence the existential theory of $\R_{-1}$ is in NP.
However, our intent is to use the results of this section to obtain suitable maps on the reals in the next section, and hence 
a  polynomial-time reduction of the existential theory of $\R_{-1}$ to the existential theory of $\standardL$.
In other words, we answer the satisfiability question, the instances of which are open formulas, indirectly.
It is worth remarking that, if one established NP-completeness of the existential theory of $\R_{-1}$ anyhow, 
then one would still get a poly-time reduction between this problem and the existential theory of $\standardL$,
because the latter problem is known to be NP-complete (using, e.g., \cite{Hahnle:MIP,Hanikova:Handbook}). Our reduction has the added value of being explicit.

\section{ \texorpdfstring{Reducing the $\exists$-theory of $\R_{-1}$ to the $\exists$-theory of $\standardL$}{Reducing the ∃-theory of R1 to the ∃-theory of L}
}
\label{s:reduction}

Consider the mapping $\tau \colon \formAb \rightarrow \formMVhalf$, assigning to each term $\phi$ of $\langAb$ a term of $\langMVhalf$ as follows:
\setlength{\columnsep}{-40pt}
\begin{multicols}{2}
\begin{enumerate}
    \item $\tau(x)=x$ for $x \in {\mathrm Var}(\phi)$;
    \item $\tau(0)=\frac{1}{2}$; 
    \item $\tau(-\phi)=\neg(\tau(\phi))$;
    \item $\tau(\phi+\psi)=(\tau(\phi) \oplus \tau(\psi)) \otimes (\frac{1}{2} \oplus (\tau(\phi) \otimes \tau(\psi)))$;
    \item $\tau(\phi \lor \psi)=\tau(\phi) \lor \tau(\psi)$;
    \item $\tau(\phi \land \psi)=\tau(\phi) \land \tau(\psi)$. 
\end{enumerate}
\end{multicols}

For each pair of nonnegative integers $M$ and $k$, let $r_{M,k}:\mathbb R \rightarrow \mathbb R $ be defined as
\begin{equation}\label{eq:2}
r_{M,k}(a)=\frac{a}{2^{M+k+1}}+\frac{1}{2}.
\end{equation}

\begin{lemma}
\label{lm_r_m_k} \label{l:tau}
Let $M$ and $k$ be fixed nonnegative integers. Let $\phi(x_1,\dots, x_n)$ be a term in 
$\formAb$ with $n$ variables, let $\depth{\phi}=l$ with $l \leq k$. Assume  $a_1,\dots, a_n \in [-2^{M},2^{M}].$ Then
\[
f_{\tau(\phi)}(r_{M,k}(a_1),\dots,r_{M,k}(a_n))=r_{M,k}(f_{\phi}(a_1,\dots,a_n)).
\]
Moreover, $r_{M,k}(f_{\phi}(a_1,\dots,a_n)) \in [\frac{1}{2}-\frac{1}{2^{k-l+1}},\frac{1}{2}+\frac{1}{2^{k-l+1}}]$.
\end{lemma}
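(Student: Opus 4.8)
The plan is to establish both conclusions together, by structural induction on $\phi$ (equivalently, induction on $\depth{\phi}$). It helps to first peel off a magnitude bound that drives the whole argument: by the same induction I would show that $|f_\phi(a_1,\dots,a_n)|\le 2^{M+l}$ whenever $\depth{\phi}=l\le k$. This is clear in the base cases, where $|a_i|\le 2^M=2^{M+0}$ for a variable and $f_0=0$ for the constant; it then propagates upward, since negation and the lattice operations do not increase the absolute value while addition at most doubles it, so that an increment of $1$ in depth is matched by an increment of $1$ in the exponent. Feeding this into $r_{M,k}$ and using $2^{M+l}/2^{M+k+1}=2^{-(k-l+1)}$ produces exactly the interval $[\frac{1}{2}-\frac{1}{2^{k-l+1}},\frac{1}{2}+\frac{1}{2^{k-l+1}}]$, which is the \emph{moreover} clause. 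Note that $l\le k$ forces this interval inside $[0,1]$, so every value that occurs lies in the domain of $\standardL$.

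For the identity $f_{\tau(\phi)}(r_{M,k}(a_1),\dots,r_{M,k}(a_n))=r_{M,k}(f_\phi(a_1,\dots,a_n))$ I would again induct. The base cases are immediate from $\tau(x)=x$ and $\tau(0)=\frac{1}{2}=r_{M,k}(0)$. The negation step rests on the affine identity $1-r_{M,k}(b)=r_{M,k}(-b)$, read off directly from the definition of $r_{M,k}$, combined with $f_{\neg\chi}=1-f_\chi$ and the induction hypothesis. The lattice steps use that $r_{M,k}$ is strictly increasing and hence commutes with $\min$ and $\max$; since the relevant $r_{M,k}$-images already lie in $[0,1]$, the operations $\land,\lor$ of $\standardL$ act there as $\min,\max$, and the step closes.

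The one genuinely computational case, and the main obstacle, is addition, where the definition of $\tau$ is deliberately intricate. Writing $s=2^{M+k+1}$, $u=r_{M,k}(b)$, $v=r_{M,k}(c)$ with $b=f_\phi(\bar a)$, $c=f_\psi(\bar a)$, and $t=\frac{b+c}{s}$ so that $u+v=1+t$, I would use that $\phi$ and $\psi$ are subterms of depth at most $l-1\le k-1$; by the magnitude bound this pins $u,v$ into $[\frac{1}{4},\frac{3}{4}]$ and hence $t$ into $[-\frac{1}{2},\frac{1}{2}]$. Recalling that $a\oplus b=\min(1,a+b)$ and $a\otimes b=\max(0,a+b-1)$ on $\standardL$, I would split on the sign of $t$ (that is, on whether $u+v\ge 1$) and evaluate $(u\oplus v)\otimes(\frac{1}{2}\oplus(u\otimes v))$ in each branch; in both, the truncations resolve and the whole expression collapses to $\frac{1}{2}+t=r_{M,k}(b+c)$.

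The crux is exactly that confining the inputs to the linear regime around $\frac{1}{2}$, which is what $l\le k$ buys, keeps each $\oplus$ and $\otimes$ away from the clipping branch that would otherwise spoil the identity, so that this MV-term faithfully reproduces the shifted real addition; this is also the only place where the precise shape of $\tau(\phi+\psi)$ is used. Conceptually, the lemma says that on the relevant bounded region $r_{M,k}$ transports the $\ell$-group operations of $\R$ to the operations of $\standardL$ along $\tau$, with $\frac{1}{2}$ playing the role of the group identity $0$.
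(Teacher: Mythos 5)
Your proposal is correct and follows essentially the same route as the paper: induction on term depth, carrying the interval/magnitude bound along with the identity, using monotonicity of $r_{M,k}$ for the lattice cases and the affine identity for negation, and splitting the addition case on whether $u+v\ge 1$ so that the truncations in $\oplus$ and $\otimes$ resolve to give $\frac{1}{2}+t=r_{M,k}(b+c)$. Your phrasing of the bound as $|f_\phi(\bar a)|\le 2^{M+l}$ on the group side is just the preimage under $r_{M,k}$ of the paper's stated containment in $[\frac{1}{2}-\frac{1}{2^{k-l+1}},\frac{1}{2}+\frac{1}{2^{k-l+1}}]$, so the two arguments coincide.
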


\begin{proof}
    First let us note that for each $a \in [-2^M,2^M]$ it holds that $$r_{M,k}( a) \in [\frac{1}{2}-\frac{1}{2^{k+1}},\frac{1}{2}+\frac{1}{2^{k+1}}].$$  We will proceed by induction on term depth.
    \begin{itemize}
    \item Let $\phi$ be a variable $x$. By definition we have 
    $$f_{\tau(\phi)}(r_{M,k}(a_1),\dots,r_{M,k}(a_n))=r_{M,k}(f_{\phi}(a_1,\dots,a_n)) \in [\frac{1}{2}-\frac{1}{2^{k+1}},\frac{1}{2}+\frac{1}{2^{k+1}}].$$
    \item Let $\phi=0$. We have  $$f_{\tau(\phi)}(r_{M,k}(a_1),\dots,r_{M,k}(a_n))=\frac{1}{2}=r_{M,k}(f_{\phi}(a_1,\dots,a_n)).$$
    \end{itemize}
Now assume $\depth{\phi}=l$ and the statement holds for any $l'<l$.
    \begin{itemize}
\item   Let $\phi$ be $-\psi$. We assume $f_{\tau(\psi)}(r_{M,k}(a_1),
        \dots,r_{M,k}(a_n))=r_{M,k} (f_{\psi}(a_1,\dots,a_n))$. \linebreak
        We have 
        \begin{align*}
            f_{\tau(-\psi)}(r_{M,k}(a_1),\dots,r_{M,k}(a_n))&= f_{\neg \tau(\psi)}(r_{M,k}(a_1),\dots,r_{M,k}(a_n))\\ = 1- f_{\tau(\psi)}(r_{M,k}(a_1),\dots,r_{M,k}(a_n))&=1- r_{M,k}(f_{\psi}(a_1,\dots,a_n))\\ =
        1-(\frac{f_{\psi}(a_1,\dots,a_n)}{2^{M+k+1}}+ \frac{1}{2})&=\frac{1}{2}-\frac{f_{\psi}(a_1,\dots,a_n)}{2^{M+k+1}}=\frac{1}{2}+\frac{f_{-\psi}(a_1,\dots,a_n)}{2^{M+k+1}} \\ &= r_{M,k}(f_{-\psi}(a_1,\dots,a_n)).
        \end{align*}
        Moreover, since by (the induction) hypothesis  
        $$f_{\tau(\psi)}(r_{M,k}(a_1),\dots,r_{M,k}(a_n)) \in [\frac{1}{2}-\frac{1}{2^{k-(l-1)+1}},\frac{1}{2}+\frac{1}{2^{k-(l-1)+1}}],$$ we obtain
        \begin{align*}
        f_{\tau(-\psi)}(r_{M,k}(a_1),\dots,r_{M,k}(a_n))&=1-f_{\tau(\psi)}(r_{M,k}(a_1),\dots,r_{M,k}(a_n)) \\ \in[\frac{1}{2}-\frac{1}{2^{k-l+2}},\frac{1}{2}+\frac{1}{2^{k-l+2}}] &\subseteq [\frac{1} {2}-\frac{1}{2^{k-l+1}},\frac{1}{2}+\frac{1}{2^{k-l+1}}].
        \end{align*}
        \item Let $\phi=\psi \land \gamma$.
        We assume
        \begin{align*}
        f_{\tau(\psi)}(r_{M,k}(a_1),
        \dots,r_{M,k}(a_n))&=r_{M,k} (f_{\psi}(a_1,\dots,a_n)),\\
        f_{\tau(\gamma)}(r_{M,k}(a_1),
        \dots,r_{M,k}(a_n))&=r_{M,k} (f_{\gamma}(a_1,\dots,a_n)).
        \end{align*}
        We obtain 
        \begin{align*}
            f_{\tau(\psi \land \gamma)}(r_{M,k}(a_1),\dots,r_{M,k}(a_n))&=f_{\tau(\psi) \land \tau(\gamma)}(r_{M,k}(a_1),\dots,r_{M,k}(a_n))\\=
            (f_{\tau(\psi)}\land f_{\tau(\gamma)})(r_{M,k}(a_1),\dots,r_{M,k}(a_n))&=((r_{M,k} \circ f_{\psi}) \land (r_{M,k} \circ f_{\gamma}))(a_1,\dots,a_n).
        \end{align*}
        Since $r_{M,k}$ is order preserving we obtain
        \begin{align*}
            ((r_{M,k} \circ f_{\psi}) \land (r_{M,k} \circ f_{\gamma}))(a_1,\dots,a_n)&=r_{M,k}((f_{\psi} \land f_{\gamma})(a_1,\dots,a_n))\\ &=
            r_{M,k}((f_{\psi \land \gamma})(a_1,\dots,a_n)).
        \end{align*}
        Since $$f_{\tau(\psi \land \gamma)}(r_{M,k}(a_1),\dots,r_{M,k}(a_n))= f_{\tau(\psi)}(r_{M,k}(a_1),\dots,r_{M,k}(a_n)),$$
        or $$f_{\tau(\psi \land \gamma)}(r_{M,k}(a_1),\dots,r_{M,k}(a_n))= f_{\tau(\gamma)}(r_{M,k}(a_1),\dots,r_{M,k}(a_n)),$$
        then necessarily  $$f_{\tau(\psi \land \gamma)}(r_{M,k}(a_1),\dots,r_{M,k}(a_n)) \in [\frac{1}{2}-\frac{1}{2^{k-(l-1)+1}},\frac{1}{2}+\frac{1}{2^{k-(l-1)+1}}]$$
        and therefore
        $$f_{\tau(\psi \land \gamma)}(r_{M,k}(a_1),\dots,r_{M,k}(a_n)) \in  [\frac{1}{2}-\frac{1}{2^{k-l+1}},\frac{1}{2}+\frac{1}{2^{k-l+1}}].$$
        
        \item The case $\phi=\psi \lor \gamma$ is analogous.
        \item     Now let $\phi$ be $\psi + \gamma$. We assume 
        \begin{align*}
            f_{\tau(\psi)}(r_{M,k}(a_1),
        \dots,r_{M,k}(a_n))&=r_{M,k} (f_{\psi}(a_1,\dots,a_n)),\\
        f_{\tau(\gamma)}(r_{M,k}(a_1),
        \dots,r_{M,k}(a_n))&=r_{M,k} (f_{\gamma}(a_1,\dots,a_n))
        \end{align*}
        and since $l=\depth{\phi}=\max(\depth{\psi},\depth{\gamma})+1$ we obtain
        \[f_{\tau(\psi)}(r_{M,k}(a_1),\dots,r_{M,k}(a_n))\in [\frac{1}{2}-\frac{1}{2^{k-l+2}},\frac{1}{2}+\frac{1}{2^{k-l+2}}].\]
        and 
        $$f_{\tau(\gamma)}(r_{M,k}(a_1),\dots,r_{M,k}(a_n))\in [\frac{1}{2}-\frac{1}{2^{k-l+2}},\frac{1}{2}+\frac{1}{2^{k-l+2}}].$$
        We distinguish two cases:
        \begin{enumerate}
        \item In case  $$(f_{\tau(\psi)}+f_{\tau(\gamma)})(r_{M,k}(a_1),\dots,r_{M,k}(a_n)) \geq 1$$ 
        we have $$f_{\tau(\psi) \oplus \tau(\gamma)}(r_{M,k}(a_1),\dots,r_{M,k}(a_n))=1$$ and $$f_{\tau(\psi) \otimes \tau(\gamma)}(r_{M,k}(a_1),\dots,r_{M,k}(a_n))=(f_{\tau(\psi)}+f_{\tau(\gamma)})(r_{M,k}(a_1),\dots,r_{M,k}(a_n))-1.$$
        Thus, we have
    \begin{align*}
    &f_{\tau(\psi+\gamma)}(r_{M,k}(a_1),\dots,r_{M,k}(a_n))\\ =& f_{(\tau(\psi) \oplus \tau(\gamma)) \otimes (\frac{1}{2} \oplus (\tau(\psi) \otimes \tau(\gamma)))}(r_{M,k}(a_1),\dots,r_{M,k}(a_n))\\ =&
    f_{1 \otimes (\frac{1}{2} \oplus (\tau(\psi) \otimes \tau(\gamma)))}(r_{M,k}(a_1),\dots,r_{M,k}(a_n))\\ =&
    f_{\frac{1}{2} \oplus (\tau(\psi) \otimes \tau(\gamma))}(r_{M,k}(a_1),\dots,r_{M,k}(a_n)\\ =&
    \min(1,f_{\tau(\psi) \otimes \tau(\gamma)}(r_{M,k}(a_1),\dots,r_{M,k}(a_n))+\frac{1}{2})\\ =&
             \min(1,(f_{\tau(\psi)}+f_{\tau(\gamma)})(r_{M,k}(a_1),\dots,r_{M,k}(a_n))-1+\frac{1}{2})\\ =&
        \min(1,(f_{\tau(\psi)}+f_{\tau(\gamma)})(r_{M,k}(a_1),\dots,r_{M,k}(a_n))-\frac{1}{2})
        \\ =&
        \min(1,(r_{M,k} \circ f_{\psi}+ r_{M,k} \circ f_{\gamma})(a_1,\dots,a_n)-\frac{1}{2}).
\end{align*}
        Moreover, by the induction hypothesis $$(r_{M,k} \circ f_{\psi}+ r_{M,k} \circ f_{\gamma}) (a_1,\dots,a_n)-\frac{1}{2} \in [\frac{1}{2}-\frac{1}{2^{k-l+1}},\frac{1}{2}+\frac{1}{2^{k-l+1}}]$$ and thus, we can conclude
        $$\min(1,(r_{M,k} (f_{\psi}+f_{\gamma})(a_1,\dots,a_n))-\frac{1}{2})=
        (r_{M,k} \circ f_{\psi}+ r_{M,k} \circ f_{\gamma})(a_1,\dots,a_n)-\frac{1}{2}.$$
        Therefore,
        \begin{align*}
          f_{\tau(\psi+\gamma)}(r_{M,k}(a_1),\dots,r_{M,k}(a_n))&=(r_{M,k} \circ f_{\psi}+ r_{M,k} \circ f_{\gamma})(a_1,\dots,a_n) -\frac{1}{2}\\ =\frac{(f_{\psi}+f_{\gamma})(a_1,\dots,a_n)}{2^{M+k+1}}+\frac{1}{2}&=r_{M,k}(f_{\psi+\gamma}(a_1,\dots, a_n)).  
        \end{align*}
         This completes the proof of the first case.
        \item In case $$(f_{\tau(\psi)}+f_{\tau(\gamma)})(r_{M,k}(a_1),\dots,r_{M,k}(a_n)) \leq 1,$$
        we have $$f_{\tau(\psi) \otimes \tau(\gamma)}(r_{M,k}(a_1),\dots,r_{M,k}(a_n))=0$$ and $$f_{\tau(\psi) \oplus \tau(\gamma)}(r_{M,k}(a_1),\dots,r_{M,k}(a_n))=(f_{\tau(\psi)}+f_{\tau(\gamma)})(r_{M,k}(a_1),\dots,r_{M,k}(a_n)).$$
      Thus, we have   
        \begin{align*}
        & f_{\tau(\psi+\gamma)}(r_{M,k}(a_1),\dots,r_{M,k}(a_n))\\ =&
        f_{(\tau(\psi) \oplus \tau(\gamma)) \otimes (\frac{1}{2} \oplus (\tau(\psi) \otimes \tau(\gamma)))}(r_{M,k}(a_1),\dots,r_{M,k}(a_n))\\ =&
        f_{(\tau(\psi) \oplus \tau(\gamma)) \otimes (\frac{1}{2} \oplus 0)}(r_{M,k}(a_1),\dots,r_{M,k}(a_n)) \\ =&
        f_{(\tau(\psi) \oplus \tau(\gamma)) \otimes \frac{1}{2}}(r_{M,k}(a_1),\dots,r_{M,k}(a_n))\\ =&
            \max(0,(f_{\tau(\psi)}+f_{\tau(\gamma)})(r_{M,k}(a_1),\dots,r_{M,k}(a_n))+\frac{1}{2}-1)\\ =&
            \max(0,(f_{\tau(\psi)}+f_{\tau(\gamma)})(r_{M,k}(a_1),\dots,r_{M,k}(a_n))-\frac{1}{2})\\ =&
            \max(0,(r_{M,k} \circ f_{\psi}+ r_{M,k} \circ f_{\gamma})(a_1,\dots,a_n)-\frac{1}{2}). 
        \end{align*}
   Since by the induction hypothesis 

   \begin{equation*}
       (r_{M,k} \circ f_{\psi}+r_{M,k} \circ f_{\gamma})(a_1,\dots,a_n) -\frac{1}{2}\in [\frac{1}{2}-\frac{1}{2^{k-l+1}},\frac{1}{2}+\frac{1}{2^{k-l+1}}]
   \end{equation*}  
   we can conclude that
   \begin{align*}
       f_{\tau(\psi+\gamma)}(r_{M,k}(a_1),\dots,r_{M,k}(a_n))&=(r_{M,k} \circ f_{\psi}+ r_{M,k} \circ f_{\gamma})(a_1,\dots,a_n))-\frac{1}{2}\\ =\frac{(f_{\psi}+f_{\gamma})(a_1,\dots,a_n)}{2^{M+k+1}}+\frac{1}{2}&=r_{M,k}(f_{\psi+\gamma}(a_1,\dots, a_n)).
   \end{align*}
   This completes the second case and thus concludes the proof. \qedhere
   \end{enumerate}
    \end{itemize}
\end{proof}
We will extend $\tau$ to open $\langAb$-formulas. 
For $\Phi$ of $\langAb$
define $\etau(\Phi)$ by replacing each occurrence of an atom $\phi=\psi$ with the atom $\tau(\phi)=\tau(\psi)$. 
By definition then, $\etau$ does not interfere with the boolean structure of $\Phi$.

\begin{lemma} \label{l:etau}
Let $M$ and $k$ be fixed nonnegative integers, let $\Phi$ be an open formula in $\langAb$ with $n$ variables
such that $k$ is an upper bound on the depth of terms in $\Phi$. 
Then for any $a_1,\dots, a_n \in [-2^{M},2^{M}]$ 
\begin{equation}
\R \vDash \Phi(a_1,\dots,a_n)\mbox{\  if and only if \ } \standardLhalf \vDash\etau(\Phi)(r_{M,k}(a_1),\dots,r_{M,k}(a_n)).
\end{equation}
Equivalently, for any $b_1,\dots, b_n \in [\frac{1}{2}-\frac{1}{2^{k+1}},\frac{1}{2}+\frac{1}{2^{k+1}}]$ 
\begin{equation}
\standardLhalf \vDash\etau(\Phi)(b_1,\dots,b_n)\mbox{\  if and only if \ } \R \vDash \Phi({r_{M,k}}^{-1}(b_1),\dots,{r_{M,k}}^{-1}(b_n)).
\end{equation}
\end{lemma}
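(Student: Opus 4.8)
The plan is to derive Lemma~\ref{l:etau} directly from Lemma~\ref{l:tau}, exploiting the fact that $\etau$ preserves the boolean structure of $\Phi$ and that $r_{M,k}$ is a bijection on the relevant intervals. Since an open formula is just a boolean combination of atoms $\phi=\psi$, and both the classical satisfaction relation $\vDash$ and the map $\etau$ commute with the boolean connectives $\boneg,\boland,\boor$, it suffices to verify the biconditional at the level of a single atom; the full statement then follows by a trivial induction on the boolean structure of $\Phi$. So the real content is: for a single identity $\phi=\psi$ and an assignment $a_1,\dots,a_n\in[-2^M,2^M]$, we must show
\[
f_\phi(\bar a)=f_\psi(\bar a)\ \text{ in }\R
\quad\Longleftrightarrow\quad
f_{\tau(\phi)}(r_{M,k}(\bar a))=f_{\tau(\psi)}(r_{M,k}(\bar a))\ \text{ in }\standardLhalf.
\]

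First I would invoke Lemma~\ref{l:tau}. Its hypothesis requires that the depth of the term be at most $k$; this is exactly guaranteed here because $k$ is assumed to be an upper bound on the depth of all terms appearing in $\Phi$, so both $\depth\phi\leq k$ and $\depth\psi\leq k$. Applying the lemma to $\phi$ and to $\psi$ separately yields
\[
f_{\tau(\phi)}(r_{M,k}(\bar a))=r_{M,k}\bigl(f_\phi(\bar a)\bigr),
\qquad
f_{\tau(\psi)}(r_{M,k}(\bar a))=r_{M,k}\bigl(f_\psi(\bar a)\bigr).
\]
Now the atom on the MV-side asserts $r_{M,k}(f_\phi(\bar a))=r_{M,k}(f_\psi(\bar a))$, and since $r_{M,k}(a)=\frac{a}{2^{M+k+1}}+\frac12$ is an affine map with nonzero slope, it is injective on all of $\mathbb R$; hence this equality holds if and only if $f_\phi(\bar a)=f_\psi(\bar a)$, which is precisely the atom on the $\R$-side. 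This settles the atomic case and, by the structural induction described above, the first displayed biconditional of the lemma.

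For the second (``equivalent'') form, I would simply reparametrize. Every $b_i$ in the interval $[\frac12-\frac1{2^{k+1}},\frac12+\frac1{2^{k+1}}]$ is the image under $r_{M,k}$ of a unique real number; solving $r_{M,k}(a)=b$ gives $a=2^{M+k+1}\bigl(b-\tfrac12\bigr)$, and the endpoints $\frac12\pm\frac1{2^{k+1}}$ map back to $\pm 2^M$, so ${r_{M,k}}^{-1}(b_i)\in[-2^M,2^M]$ exactly when $b_i$ lies in the stated interval. Thus setting $a_i={r_{M,k}}^{-1}(b_i)$ makes the second biconditional a verbatim restatement of the first, read from right to left.

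I do not anticipate a genuine obstacle, as the heavy lifting—the commutation of $f_{\tau(\cdot)}$ with $r_{M,k}$ and the depth/range bookkeeping—has already been discharged in Lemma~\ref{l:tau}. The only point requiring care is bookkeeping rather than ideas: one must check that the interval hypotheses match up, namely that $r_{M,k}$ genuinely maps $[-2^M,2^M]$ onto $[\frac12-\frac1{2^{k+1}},\frac12+\frac1{2^{k+1}}]$ bijectively (which follows from the affine form and the monotonicity already noted in the proof of Lemma~\ref{l:tau}), and that the depth bound $k$ is uniform across all atoms of $\Phi$ so that Lemma~\ref{l:tau} applies to every term simultaneously. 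With those observations recorded, the proof reduces to the one-line injectivity argument per atom plus a routine induction on connectives.
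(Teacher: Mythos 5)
Your proposal is correct and follows essentially the same route as the paper's proof: an induction on the boolean structure of $\Phi$ that reduces everything to the atomic case, which is then settled by applying Lemma~\ref{l:tau} to each side of an identity and using the injectivity of the affine map $r_{M,k}$, with the second biconditional obtained from the first via the bijectivity of $r_{M,k}$ between $[-2^M,2^M]$ and $[\frac{1}{2}-\frac{1}{2^{k+1}},\frac{1}{2}+\frac{1}{2^{k+1}}]$. The only cosmetic difference is that you make the endpoint computation for ${r_{M,k}}^{-1}$ explicit where the paper merely cites the bijection.
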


\begin{proof}
We only prove the first equivalence, the second following from the first one using the fact that $r_{M,k}$ is a bijection  between $[-2^M,2^M]$ and $[\frac{1}{2}-\frac{1}{2^{k+1}},\frac{1}{2}+\frac{1}{2^{k+1}}]$.
The proof is by induction on boolean structure of $\Phi$.
    For an atom $\phi=\psi$ (where $\phi$, $\psi$ are terms), if
$f_{\phi}(a_1,\dots, a_n)=f_{\psi}(a_1,\dots, a_n)$ then $r_{M,k}(f_{\phi}(a_1,\dots,a_n))=r_{M,k}(f_{\psi}(a_1,\dots,a_n))$ and using 
Lemma~\ref{lm_r_m_k}, $$f_{\tau(\phi)}(r_{M,k}(a_1),\dots,r_{M,k}(a_n))=f_{\tau(\psi)}(r_{M,k}(a_1),\dots,r_{M,k}(a_n)),$$
which implies $\standardLhalf \vDash \etau(\phi=\psi)(r_{M,k}(a_1),\dots,r_{M,k}(a_n)).$
The other implication is obtained analogously using the fact that $r_{M,k}$ is one-to-one.

Now  assume the induction hypothesis for open formulas $\Psi$ and 
$\Delta$, that is  
\begin{align}
\R \vDash \Psi(a_1,\dots,a_n) &\text{ iff }\standardLhalf \vDash\etau(\Psi)(r_{M,k}(a_1),\dots,r_{M,k}(a_n))\mbox { and }\\
\R \vDash \Delta(a_1,\dots,a_n) &\text{ iff }\standardLhalf \vDash\etau(\Delta)(r_{M,k}(a_1),\dots,r_{M,k}(a_n)). 
\end{align}
First, since $$\R \nvDash \Psi(a_1,\dots,a_n) \text{ iff }\standardLhalf \nvDash\etau(\Psi)(r_{M,k}(a_1),\dots,r_{M,k}(a_n)),$$ and 
$\etau(\boneg \Psi)=\boneg \etau(\Psi)$, it follows that $$\R \vDash \boneg \Psi(a_1,\dots,a_n) \text{ iff } \standardLhalf \vDash\etau( \boneg\Psi)(r_{M,k}(a_1),\dots,r_{M,k}(a_n)).$$

Also,  $\etau(\Psi \boland \Delta)=\etau(\Psi) \boland \etau(\Delta)$ and thus clearly
$$\R \vDash (\Psi \boland \Delta)(a_1,\dots,a_n) \text{ iff } \standardLhalf \vDash \etau(\Psi \boland \Delta)(r_{M,k}(a_1),\dots,r_{M,k}(a_n)).$$
The induction step for the connective $\boor$ can be checked analogously.
\end{proof}

Let us recap what has been established. Let $M$ and $k$ be fixed nonnegative integers. 
Let $\Phi$ be an open formula with $n$ variables in $\langAb$ and term depth bounded by  $k$ be given.
Using the bijection $r_{M,k}$ of $[-2^M,2^M]$ 
onto $[\frac{1}{2}-\frac{1}{2^{k+1}},\frac{1}{2}+\frac{1}{2^{k+1}}]$  one can
map satisfying assignments of (variables in) $\Phi$ in $\R_{-1}$ to those of $\etau(\Phi)$ in $\standardL$ (which fall into $[\frac{1}{2}-\frac{1}{2^{k+1}},\frac{1}{2}+\frac{1}{2^{k+1}}]$), and 
conversely, any satisfying assignment to $\etau(\Phi)$ in the interval $[\frac{1}{2}-\frac{1}{2^{k+1}},\frac{1}{2}+\frac{1}{2^{k+1}}]$  of $\standardL$ can be mapped to a satisfying assignment to $\Phi$ (in $[-2^M,2^M]$).

The translation $\etau$, as presented, can lead to an exponential increase in term size.
For example  $\tau(\dots(x_1 + x_2)+x_3)+\dots+x_n)$ has   $2^{n-1}$ occurrences of variable $x_1$.
To attain a polynomial-size translation of open $\langpAb$-formulas to 
open $\langMV$-formulas,   complex terms need to be removed from $\Phi$ while preserving satisfying assignments,  before applying $\etau$. This can be done relying (again) on Tseitin transformations.
\begin{lemma}
\label{tseitin_etau_polysize}
Let $\Phi$ be a $\langAb$ formula. Let $\Phi'$ be the result of a Tseitin transformation of $\Phi$. 
Then for any assignment $v$ in $\standardLhalf$ one has that $v$ satisfies 
$\etau(\Phi)$ if and only if $\etau(\Phi')$ is satisfied by the assignment $v'$, where $v'$ uniquely extends  $v$ by sending each added variable $x_{\phi}$ to $v(\tau(\phi))$. Moreover, $\sharp(\etau(\Phi'))$ is polynomial in $\sharp(\Phi)$.
\end{lemma}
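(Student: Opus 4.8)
The plan is to prove the satisfaction equivalence first and the size bound second, the conceptual core being a \emph{substitution property} of $\tau$. The key observation is that $\tau$ is defined compositionally: for every compound $\langAb$-term $f(\phi_1,\dots,\phi_m)$, the clause defining $\tau(f(\phi_1,\dots,\phi_m))$ builds it purely out of $\tau(\phi_1),\dots,\tau(\phi_m)$ by a fixed pattern of $\langMVhalf$-operations that depends only on $f$. Consequently $\tau$ commutes with substitution: if $\phi$ is $f(\phi_1,\dots,\phi_m)$ and $x_{\phi_1},\dots,x_{\phi_m}$ are the Tseitin placeholders for its immediate subterms, then the $\langMVhalf$-term $\tau(\phi)$ is obtained from $\tau(f(x_{\phi_1},\dots,x_{\phi_m}))$ by replacing each $x_{\phi_i}$ with $\tau(\phi_i)$. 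I would record this as the first step, by a one-line induction that simply reads off the inductive clauses of $\tau$ (each of which refers only to the $\tau$-images of the immediate subterms).

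Second, I would use this to verify that every translated Tseitin clause holds under $v'$. Recall $\Phi'=\Psi\boland\foo_\phi\,(x_\phi=f(x_{\phi_1},\dots,x_{\phi_m}))$, so $\etau(\Phi')=\etau(\Psi)\boland\foo_\phi\,(x_\phi=\tau(f(x_{\phi_1},\dots,x_{\phi_m})))$, since $\etau$ leaves the boolean skeleton untouched. Evaluating the right-hand side of such a clause under $v'$, where $v'(x_{\phi_i})=v(\tau(\phi_i))$ by definition, gives $f_{\tau(f(x_{\phi_1},\dots,x_{\phi_m}))}(v(\tau(\phi_1)),\dots,v(\tau(\phi_m)))$, which by the substitution property of the first step equals $v(\tau(\phi))$; and the left-hand side $v'(x_\phi)$ is also $v(\tau(\phi))$ by definition. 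Hence each translated clause is satisfied under $v'$, so $v'\models\etau(\Phi')$ iff $v'\models\etau(\Psi)$.

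Third, I would match $\etau(\Psi)$ under $v'$ with $\etau(\Phi)$ under $v$ atom by atom. Every atom of $\Psi$ has the form $x_\phi=x_\psi$ (with $x_\phi$ the original variable when $\phi$ already is one), so since $\tau$ fixes variables its $\etau$-image is again $x_\phi=x_\psi$; under $v'$ this holds iff $v(\tau(\phi))=v(\tau(\psi))$, which is exactly the condition for the corresponding atom $\tau(\phi)=\tau(\psi)$ of $\etau(\Phi)$ to hold under $v$. As $\etau$ and the Tseitin construction both leave the boolean structure of $\Phi$ intact, a routine induction using $\etau(\boneg\,\cdot)=\boneg\,\etau(\cdot)$ and the analogues for $\boland,\boor$ lifts this atomic correspondence to $v'\models\etau(\Psi)$ iff $v\models\etau(\Phi)$; combined with the previous paragraph this yields the claimed equivalence.

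Finally, for the size bound I would tally the two parts of $\etau(\Phi')$. The formula $\etau(\Psi)$ has at most $\atoms{\Phi}$ atoms, each a variable-identity of size $\bigO(1)$. There are at most $|T|\le 3\sharp(\Phi)$ added clauses, and the $\etau$-image of each is $x_\phi=\tau(f(x_{\phi_1},\dots,x_{\phi_m}))$, where $f(x_{\phi_1},\dots,x_{\phi_m})$ has depth $1$; its $\tau$-expansion therefore has size bounded by a fixed constant (the worst case being $\tau(x_{\phi_1}+x_{\phi_2})=(x_{\phi_1}\oplus x_{\phi_2})\otimes(\tfrac12\oplus(x_{\phi_1}\otimes x_{\phi_2}))$). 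Summing, $\sharp(\etau(\Phi'))=\bigO(\sharp(\Phi))$, which is in particular polynomial. The main point to get right is the substitution property of the first step, on which the clause-satisfaction computation rests; the size bound carries no real obstacle but is precisely the payoff of flattening by Tseitin before applying $\tau$, since $\tau$ applied to unflattened terms would blow up exponentially, as already noted.
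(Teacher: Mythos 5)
The paper states this lemma without proof, treating it as a routine consequence of the compositionality of $\tau$ together with the general Tseitin equisatisfiability argument already given in Section~\ref{s:prelim}; your proof is a correct and complete filling-in of exactly that argument, with the substitution property you isolate being the right key point. Your size count (in fact linear, since each flattened clause has depth~$1$ and hence constant-size $\tau$-image) gives the stated polynomial bound.
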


\begin{lemma} \label{l:bonus_variables}
Let $M,k$ be fixed nonnegative integers.
Consider the following set of equations in \langMV:
    \begin{enumerate}
        \item $z_1=\neg z_1$.
        \item $(z_{i+1} \oplus z_{i+1}=z_{i})$ for each $i \leq M+k$.
        \item $q =\neg (z_1 \oplus z_{M+k+1})$ and $r=\neg (z_1 \oplus z_{k+1})$.
    \end{enumerate}
Any assignment $v$ in $\standardL$ that satisfies all these equations has $v(z_i)=\frac{1}{2^i}$, $v(q)=\frac{1}{2}-\frac{1}{2^{M+k+1}}$ and $v(r)=\frac{1}{2}-\frac{1}{2^{k+1}}$.  
\end{lemma}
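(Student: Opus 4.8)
The plan is to solve the system from the bottom up, exploiting the fact that each listed equation, once read as a numerical equation in $\standardL$, determines a fresh variable uniquely in terms of the values already fixed. Recall that in $\standardL$ one has $f_{\neg a}=1-a$ and $f_{a\oplus b}=\min(1,a+b)$. First I would dispatch the equation $z_1=\neg z_1$, which reads $v(z_1)=1-v(z_1)$ in $\standardL$ and so has the unique solution $v(z_1)=\tfrac{1}{2}$.

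Next I would prove by induction on $i$ that $v(z_i)=\tfrac{1}{2^i}$ for every $1\le i\le M+k+1$. The base case $i=1$ is the computation just made. For the inductive step, the equation $z_{i+1}\oplus z_{i+1}=z_i$ reads $\min\bigl(1,\,2v(z_{i+1})\bigr)=v(z_i)=\tfrac{1}{2^i}$. The key observation is that the right-hand side is strictly below $1$, so the minimum cannot be attained at the cap $1$; hence $2v(z_{i+1})=\tfrac{1}{2^i}$, forcing $v(z_{i+1})=\tfrac{1}{2^{i+1}}$. Letting $i$ range from $1$ to $M+k$ then pins down $v(z_2),\dots,v(z_{M+k+1})$ exactly.

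Finally I would substitute into the equations of item 3. Using $v(z_1)=\tfrac12$ and $v(z_{M+k+1})=\tfrac{1}{2^{M+k+1}}$ we get $v(z_1)+v(z_{M+k+1})=\tfrac12+\tfrac{1}{2^{M+k+1}}\le 1$ because $M+k+1\ge 1$; so $\oplus$ again does not saturate and $v(q)=1-\bigl(\tfrac12+\tfrac{1}{2^{M+k+1}}\bigr)=\tfrac12-\tfrac{1}{2^{M+k+1}}$. The computation for $r$ is identical with $k+1$ in place of $M+k+1$, and $k+1\ge 1$ guarantees non-saturation there as well, yielding $v(r)=\tfrac12-\tfrac{1}{2^{k+1}}$.

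The only point requiring any care — the ``main obstacle,'' such as it is — is the repeated appeal to the non-saturation of $\oplus$: each step works precisely because the relevant target value stays strictly below $1$ (for the induction on the $z_i$) or the two summands add to at most $1$ (for $q$ and $r$). This is exactly where the hypotheses $M,k\ge 0$ enter, ensuring $\tfrac{1}{2^{M+k+1}}\le\tfrac12$ and $\tfrac{1}{2^{k+1}}\le\tfrac12$. Everything else is a routine unwinding of the standard MV operations, so I do not anticipate further difficulty.
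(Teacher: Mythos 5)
Your proof is correct, and the bottom-up computation (pin down $v(z_1)=\tfrac12$, induct to get $v(z_i)=\tfrac1{2^i}$ using non-saturation of $\oplus$ below $1$, then substitute for $q$ and $r$) is exactly the routine verification the paper intends — indeed the paper states this lemma without supplying a proof. Your attention to the boundary case where the sum equals exactly $1$ (so $\min(1,a+b)=a+b$ still holds) covers the only point of potential concern.
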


Notice that in the following lemma, the translation $S$ operates on $\langpAb$-formulas
and brings them to  $\langMV$-formulas.

\begin{lemma}
    Let $\Phi(x_1,\dots,x_n)$ be an open formula in the language $\langpAb$.
    Let $\Phi'$ be a Tseitin variant of $\Phi$.
     Assume that the variables $\{z_1,\dots,z_{M+k+1},r,q\}$ do not occur in  $\mathrm{Var}(\Phi')$.
    Let $\zeta (\Phi')$ be obtained from $\Phi'$ by replacing
    each occurrence of the constant $-1$ in $\Phi'$ with the variable $q$, applying $\etau$ to the resulting formula and then replacing all occurrences of the constant $\nicefrac{1}{2}$ with  the variable $z_1$.
    Let $k$ be the maximal depth of terms in $\Phi$ and, denoting $N\coloneqq \sharp(\Phi)$,
    set $M\coloneqq\lceil cN \log_2 N \rceil$ for some large integer $c$.
    Let $S(\Phi,M,k)$ be the following  open formula in $\langMV$: 
\begin{multline*}
    \zeta (\Phi') \boland     (z_1=\neg z_1) \boland (r=\neg (z_1 \oplus z_{k+1})) \boland\\
     (q =\neg (z_1 \oplus z_{M+k+1}) )\boland  \foo_{i \leq M+k} (z_{i+1} \oplus z_{i+1}=z_{i}) \boland \foo_{i \leq n} (r \leq x_i) \boland \foo_{i \leq n}(x_i \leq \neg r).
\end{multline*}
 Then $\Phi$ has a satisfying assignment in
 $\R_{-1}$ if and only if $S(\Phi,M,k)$ has a satisfying assignment in
$\standardL$.
 Moreover, $\sharp(S(\Phi,M,k))$ is polynomial in $\sharp(\Phi)$.
\end{lemma}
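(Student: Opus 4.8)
The plan is to read $S(\Phi,M,k)$ as the translation $\zeta(\Phi')$ rigidly coupled to an arithmetical gadget---the equations defining $z_1,\dots,z_{M+k+1},q,r$---whose sole purpose is to materialise, as values of fresh variables, the three constants the construction needs but which are absent from $\langMV$. By Lemma~\ref{l:bonus_variables} every assignment $v$ in $\standardL$ satisfying the gadget is forced to set $v(z_i)=\frac{1}{2^i}$, $v(q)=\frac12-\frac{1}{2^{M+k+1}}=r_{M,k}(-1)$ and $v(r)=\frac12-\frac{1}{2^{k+1}}=r_{M,k}(-2^M)$. Thus $z_1$ supplies the constant $\frac12=r_{M,k}(0)$ deleted in the last step of $\zeta$, the variable $q$ substituted for $-1$ behaves exactly as the $r_{M,k}$-image of the constant $-1$, and the conjuncts $r\le x_i$, $x_i\le\neg r$ confine each original variable to $[\,r_{M,k}(-2^M),r_{M,k}(2^M)\,]=[\frac12-\frac{1}{2^{k+1}},\frac12+\frac{1}{2^{k+1}}]$, i.e.\ exactly the $r_{M,k}$-image of $[-2^M,2^M]$, since $\neg r=1-r=\frac12+\frac{1}{2^{k+1}}$. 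With these identifications the bridge between $\R_{-1}$ and $\standardL$ is the one already built in Lemmas~\ref{l:tau} and~\ref{l:etau}.

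For the forward implication I would assume $\R_{-1}\models\exists\bar x\,\Phi$ and first invoke Lemma~\ref{t:small models}: since $M=\lceil cN\log_2 N\rceil$, taking $c$ large enough that this $M$ dominates the bound of that lemma yields a satisfying tuple $\bar a=(a_1,\dots,a_n)$ with every $a_i\in[-2^M,2^M]$. Replacing $-1$ by a variable $q$ and recording the value $a_q:=-1\in[-2^M,2^M]$ converts $\Phi$ into a genuine $\langAb$-formula, so Lemmas~\ref{l:tau} and~\ref{l:etau} apply. I would then define the $\standardL$-assignment $v$ by $v(x_i)=r_{M,k}(a_i)$ and $v(q)=r_{M,k}(-1)$, add the forced gadget values $v(z_i)=\frac{1}{2^i}$ and $v(r)=r_{M,k}(-2^M)$, and extend to each Tseitin variable $x_\phi$ by $v(x_\phi):=v(\tau(\phi))$, which by Lemma~\ref{l:tau} equals $r_{M,k}(f_\phi(\bar a))$. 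The gadget equations then hold by Lemma~\ref{l:bonus_variables}, the bound constraints hold because $v(x_i)\in[v(r),\neg v(r)]$, the Tseitin conjuncts of $\zeta(\Phi')$ hold by another application of Lemma~\ref{l:tau}, and satisfaction of the core $\zeta(\Phi')$ follows from Lemma~\ref{l:etau} for $\Phi$ combined with Lemma~\ref{tseitin_etau_polysize} relating $\etau(\Phi)$ and $\etau(\Phi')$, using that replacing $\frac12$ by $z_1$ is harmless since $v(z_1)=\frac12$.

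For the converse I would start from a $v$ satisfying $S(\Phi,M,k)$. Lemma~\ref{l:bonus_variables} again pins $v(z_i),v(q),v(r)$, and the conjuncts $r\le x_i\le\neg r$ force $v(x_i)\in[\frac12-\frac{1}{2^{k+1}},\frac12+\frac{1}{2^{k+1}}]$, so $a_i:=r_{M,k}^{-1}(v(x_i))\in[-2^M,2^M]$ is well defined and $r_{M,k}^{-1}(v(q))=-1$. Since $v$ satisfies the Tseitin conjuncts of $\zeta(\Phi')$ it automatically satisfies $v(x_\phi)=v(\tau(\phi))$ for each added variable, so Lemma~\ref{tseitin_etau_polysize} carries the truth of $\zeta(\Phi')$---which, reading $z_1$ as $\frac12$, is $\etau$ of $\Phi'$ with $-1$ replaced by $q$---back to $\etau$ of the untransformed formula; the second equivalence of Lemma~\ref{l:etau} then returns the satisfying tuple $\bar a$ for $\Phi$ in $\R$ with $q$ interpreted as $-1$, i.e.\ in $\R_{-1}$. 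The size claim is immediate: $\zeta$ only substitutes variables for constants, so $\sharp(\zeta(\Phi'))$ is polynomial by Lemma~\ref{tseitin_etau_polysize}, while the gadget adds $M+k+O(1)+2n=O(N\log_2 N)$ atoms of unit size.

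The step I expect to be the main obstacle is the forward-direction handling of the Tseitin variables: one must verify that setting $v(x_\phi)=r_{M,k}(f_\phi(\bar a))$ lands in $[0,1]$ and satisfies the $\langMV$-translated defining atom even when the represented real $f_\phi(\bar a)$ exceeds $2^M$. This is exactly what the depth hypothesis $\depth{\phi}\le k$ in Lemma~\ref{l:tau} secures---it prevents $\oplus$ and $\otimes$ from clipping at $0$ or $1$---so that $r_{M,k}(f_\phi(\bar a))$ stays in $[\frac12-\frac{1}{2^{k-l+1}},\frac12+\frac{1}{2^{k-l+1}}]$ for $l=\depth{\phi}$; the residual care is only in confirming that $k$ (the maximal term depth of $\Phi$) covers every subterm and that the constant $c$ in $M$ is large enough for Lemma~\ref{t:small models}.
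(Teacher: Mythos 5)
Your proposal is correct and follows essentially the same route as the paper's own proof: bound the satisfying assignment via Lemma~\ref{t:small models}, treat $-1$ as a fresh variable $q$ valued at $r_{M,k}(-1)$, transport the assignment through $r_{M,k}$ using Lemmas~\ref{l:tau}, \ref{l:etau} and \ref{tseitin_etau_polysize}, and let Lemma~\ref{l:bonus_variables} pin down $z_i$, $q$, $r$ so that the range constraints license the inverse map in the converse direction. Your closing remark about the Tseitin variables taking values $r_{M,k}(f_\phi(\bar a))$ for subterm values possibly exceeding $2^M$ is a point the paper glosses over (it merely notes these values lie in $[-2^{M+k+1},2^{M+k+1}]$ before invoking Lemma~\ref{l:etau}), and your appeal to the depth-tracking clause of Lemma~\ref{l:tau} is the right way to discharge it.
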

\begin{proof}
    Assume first that $\Phi(x_1,\dots, x_n)$ is satisfied in $\R_{-1}$. By Lemma~\ref{t:small models},
    there is a satisfying assignment $v(x_i)=a_i$ with $a_i \in   [-2^M,2^M]$. Then $\Phi'$ in variables $x_1,\dots, x_m$ (with $m\geq n$) is satisfied by
    any assignment that extends $v$ on $x_1,\dots, x_n$ by sending the extra variables $x_{n+1},\dots,x_m$ to the values $a_{n+1},\dots, a_m$ 
    of subterms in $\Phi$ in  the interval  $ [-2^{M+k+1},2^{M+k+1}]$.
    W.l.o.g.~we assume that $v$ itself is this extended, satisfying assignment.
    Apply Lemma~\ref{l:etau}, sending $-1$ to $\frac{1}{2}-\frac{1}{2^{M+k+1}}$ via the mapping $r_{M,k}$.
    Then $\zeta(\Phi' )$ has a satisfying assignment 
    $r_{M,k}(a_1),\dots, r_{M,k}(a_m)$     in $\standardL$
    and moreover, by construction, one that sends $q$ to $\frac{1}{2}-\frac{1}{2^{M+k+1}}$ and $z_1$ to $\frac{1}{2}$.
    By Lemma~\ref{l:bonus_variables}, the formula 
    \begin{equation} \label{eq:bonus variables}
        (z_1=\neg z_1) \boland (q =\neg (z_1 \oplus z_{M+k+1}) )\boland (r=\neg (z_1 \oplus z_{k+1}))\boland  \foo_{i \leq M+k} (z_{i+1} \oplus z_{i+1}=z_{i}) 
    \end{equation}
    has the unique satisfying assignment in $\standardL$, which
    sends $z_i$ to $\nicefrac{1}{2^i}$ for each $i \leq M+k$, $q$ to  $\frac{1}{2}-\frac{1}{2^{M+k+1}}$ and $r$ to $\frac{1}{2}-\frac{1}{2^{k+1}}$. 
    By construction, we have that $r_{M,k}(a_i)$ for $i\leq n$ belongs to   $[\frac{1}{2}-\frac{1}{2^{k+1}},\frac{1}{2}+\frac{1}{2^{k+1}}]$, so $ \foo_{i \leq n} (r \leq x_i) \boland \foo_{i \leq n}(x_i \leq \neg r)$ 
    is satisfied too.     

For the converse implication assume $S(\Phi,M,k)$ is satisfied in $\standardL$ by an assignment $v$. Let $v(x_i)=b_i$ for $i\leq n$.
In particular, then, Equation \ref{eq:bonus variables}
is satisfied and hence by Lemma~\ref{l:bonus_variables}, $v(z_1)=\frac{1}{2}$, $v(q)=  \frac{1}{2}-\frac{1}{2^{M+k+1}}$ and $v(r)=\frac{1}{2}-\frac{1}{2^{k+1}}$. This implies that $v(x_i)$ for $i\leq n$ are in $[\frac{1}{2}-\frac{1}{2^{k+1}},\frac{1}{2}+\frac{1}{2^{k+1}}]$. 
Using Lemma~\ref{tseitin_etau_polysize} one can easily show that any evaluation satisfying $\zeta(\Phi')$ will also satisfy $\zeta(\Phi)$. Thus $v$ satisfies $\zeta(\Phi)$ with $v(q)=\frac{1}{2}-\frac{1}{2^{M+k+1}}$ and $v(z_1)=\frac{1}{2}$. By Lemma~\ref{l:etau}
the assignment $v'(x_i)=r^{-1}_{M,k}(b_i)$ for $i\leq n$ satisfies
$\Phi$, which completes the proof.
\end{proof}

\begin{theorem}
The existential theory of $\R_{-1}$ reduces in polynomial time to the existential theory of $\standardL$. Therefore, the former theory is in NP.
\end{theorem}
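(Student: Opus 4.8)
The plan is to read the theorem off as an immediate consequence of the preceding (unnamed) lemma, which already supplies the reduction map $\Phi\mapsto S(\Phi,M,k)$. An instance of the existential theory of $\R_{-1}$ is the existential closure of an open $\langpAb$-formula $\Phi$, and by the semantics recalled in Section~\ref{s:prelim} we have $\R_{-1}\models\exists\bar x\,\Phi$ precisely when $\Phi$ is satisfiable in $\R_{-1}$; dually, an existential $\langMV$-sentence $\exists\bar y\,S$ lies in the existential theory of $\standardL$ iff $S$ is satisfiable there. So I would define the reduction to send $\exists\bar x\,\Phi$ to $\exists\bar y\,S(\Phi,M,k)$, where $\bar y$ collects the original variables together with the auxiliary variables $x_\phi$ introduced by the Tseitin transformation and the bookkeeping variables $z_1,\dots,z_{M+k+1},q,r$.

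First I would fix the parameters: compute $N\coloneqq\sharp(\Phi)$ and the maximal term depth $k$ of $\Phi$, and set $M\coloneqq\lceil cN\log_2 N\rceil$ for the constant $c$ furnished by Lemma~\ref{t:small models}. The correctness of the reduction is then exactly the equivalence established in the preceding lemma: $\Phi$ is satisfiable in $\R_{-1}$ iff $S(\Phi,M,k)$ is satisfiable in $\standardL$. Hence the map carries yes-instances to yes-instances and no-instances to no-instances, and it is a many-one reduction between the two existential theories.

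It remains to verify that the reduction runs in polynomial time, and this is essentially the only work remaining. Both $N$ and $k$ are computable in time polynomial in the size of $\Phi$, and $M$ has value $O(N\log N)$, so the halving chain $\foo_{i\le M+k}(z_{i+1}\oplus z_{i+1}=z_i)$ contributes only $O(N\log N)$ new variables and atoms—polynomially many—even though the associated bound $2^M$ on the witnesses in $\R_{-1}$ is exponentially large. The point I would stress is that $\etau$ must not be applied to $\Phi$ directly: as the example $\tau(\dots(x_1+x_2)+\dots+x_n)$ in the text shows, this would incur an exponential blow-up in term size. Instead the construction first passes to a Tseitin variant $\Phi'$, in which every term has depth at most one, and only then applies $\etau$; Lemma~\ref{tseitin_etau_polysize} guarantees that $\zeta(\Phi')$ has size polynomial in $\sharp(\Phi)$ while preserving satisfying assignments. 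Together with the $O(n)$ bounding atoms $\foo_{i\le n}(r\le x_i)\boland\foo_{i\le n}(x_i\le\neg r)$ and the bonus equations, this shows that $\sharp(S(\Phi,M,k))$ is polynomial and that $S(\Phi,M,k)$ is produced in polynomial time.

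Finally, NP membership follows by composition: the existential theory of $\standardL$ is known to be in NP (e.g.\ via \cite{Hahnle:MIP,Hanikova:Handbook}), and a polynomial-time many-one reduction of the existential theory of $\R_{-1}$ to a problem in NP places the former in NP as well. I expect the main obstacle to be not the logical equivalence—already discharged by the preceding lemma—but the careful polynomial-time bookkeeping just described: confirming that naming the exponential bound $2^M$ through the dyadic chain of $z_i$, and interleaving the Tseitin transformation with $\etau$, together keep both the output size and the running time polynomial in $\sharp(\Phi)$.
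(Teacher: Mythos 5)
Your proposal is correct and matches the paper's intent exactly: the theorem is stated as an immediate consequence of the preceding lemma, with the reduction $\exists\bar x\,\Phi\mapsto\exists\bar y\,S(\Phi,M,k)$, the choice $M=\lceil cN\log_2 N\rceil$, and NP membership obtained by composing with the known NP-completeness of the existential theory of $\standardL$. Your emphasis on the polynomial-time bookkeeping (Tseitin before $\etau$, and the dyadic chain of $z_i$'s naming the bound $2^M$ with only $O(N\log N)$ atoms) is precisely the content the paper delegates to the preceding lemmas.
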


It follows that the universal theory of $\R_{-1}$ is in coNP. 
It was already established in the paper \cite{Cintula-Jankovec-Noguera:SuperabelianLogics} that the equational theory of $\R_{-1}$ is coNP-hard. Hence the universal theory and the quasiequational theory of $\R_{-1}$  are coNP-complete and the existential theory is NP-complete.

\section{Reducing the \texorpdfstring{$\exists$}{∃}-theory of \texorpdfstring{$\standardL$}{L} to itself}
\label{s:embeddings}

This section investigates the composition of the reduction from $\standardL$ to $\R_{-1}$ and that from $\R_{-1}$ to $\standardLhalf$. 
First let us introduce a variant of the translation used in \cite{Cintula-Jankovec-Noguera:SuperabelianLogics}
which provided a faithful interpretation of   the universal theory of $\standardL$ (in the language $\langMV)$ in
the universal theory of  $\R_{-1}$ (in the language $\langpAb$), mentioned in Section \ref{s:intro}. 
Such a  mapping $\delta$ from $\formMV$ to $\formpAb$ is defined recursively as follows: 
\begin{multicols}{2}
\begin{enumerate}
\item $\delta(x) = (x \lor -1) \land 0$ for $x \in \mathrm{Var}$; 
\item $\delta(\phi \rightarrow \xi) = (\delta(\xi) - \delta(\phi)) \land 0$;
\item $\delta(\phi \otimes \psi)=(\delta(\phi) + \delta(\psi)) \lor -1$;
\item $\delta(0) = -1$;
\item $\delta(1) = 0$;
\item $\delta(\phi \circ \xi) = \delta(\phi)  \circ \delta(\xi)$ for $\circ \in \{\land,\lor\}.$
\end{enumerate}
\end{multicols}

\begin{lemma} \label{l:delta}
    Let $\phi(x_1,\dots, x_n)$ be a term in 
$\langMV$ with $n$ variables. Assume  $a_1,\dots, a_n$ are elements of the domain of $\standardL$.
Then $f_{\delta(\phi)}((a_1-1),\dots,(a_n-1))+1=f_{\phi}(a_1,\dots,a_n)$.

\end{lemma}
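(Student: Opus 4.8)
The plan is to prove the identity by structural induction on the \langMV-term $\phi$, writing $\bar a = (a_1,\dots,a_n)$ and $\bar a - \mathbf 1 = (a_1-1,\dots,a_n-1)$, and reading the claim in the equivalent form $f_{\delta(\phi)}(\bar a - \mathbf 1) = f_\phi(\bar a) - 1$. Since $\delta$ is specified on variables, on the constants $0,1$, and on the connectives $\to,\otimes,\land,\lor$, and since the remaining MV-connectives $\neg,\oplus$ are term-definable from this set, it suffices to treat exactly the cases occurring in the recursive definition of $\delta$ (every term being rewritten over the generating set $\{\otimes,\to,\land,\lor,0,1\}$). The guiding intuition is that the affine shift $t\mapsto t-1$ carries the MV-domain $[0,1]$ bijectively onto $[-1,0]$, sending the MV-constants $1$ and $0$ to the \langpAb-constants $0$ and $-1$; this is precisely what $\delta(1)=0$ and $\delta(0)=-1$ record, and it explains why the group truncations inside $\delta$ are placed at $0$ and at $-1$.

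For the base cases I would read off $\phi=0$ and $\phi=1$ directly from $\delta(0)=-1$ and $\delta(1)=0$, and then check the variable case: for $a\in[0,1]$ the value $a-1$ lies in $[-1,0]$, so $f_{\delta(x)}(a-1)=\min(\max(a-1,-1),0)=a-1$, i.e.\ the double truncation $(x\lor -1)\land 0$ acts as the identity on the relevant domain, and adding $1$ recovers $a=f_x(a)$. A useful running observation, to be recorded at the outset, is that $f_\phi(\bar a)\in[0,1]$ for every MV-term $\phi$; granting the inductive claim then forces $f_{\delta(\phi)}(\bar a-\mathbf 1)\in[-1,0]$, which is exactly what lets the outer truncations in the inductive steps be discharged.

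The inductive steps are routine substitutions of the induction hypotheses into the group-term definitions of $\delta$. For $\phi\otimes\psi$, inserting $f_{\delta(\phi)}(\bar a-\mathbf 1)=f_\phi(\bar a)-1$ and the analogue for $\psi$ gives $f_{\delta(\phi\otimes\psi)}(\bar a-\mathbf 1)=\max(f_\phi(\bar a)+f_\psi(\bar a)-2,\,-1)$, and adding $1$ yields $\max(f_\phi(\bar a)+f_\psi(\bar a)-1,\,0)=f_{\phi\otimes\psi}(\bar a)$. For $\phi\to\psi$ one computes $f_{\delta(\phi\to\psi)}(\bar a-\mathbf 1)=\min(f_\psi(\bar a)-f_\phi(\bar a),\,0)$, so adding $1$ gives $\min(1,\,1-f_\phi(\bar a)+f_\psi(\bar a))$, which is exactly the standard \L ukasiewicz implication. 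The lattice cases $\land,\lor$ are immediate, since $\min$ and $\max$ commute with the shift by $-1$.

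There is no deep obstacle here; the whole content lies in the bookkeeping. The one place demanding genuine care is matching the clamping constants: the sum in $\delta(\phi\otimes\psi)$ is truncated below at $-1$ while the difference in $\delta(\phi\to\psi)$ is truncated above at $0$, and I would verify that, after the shift, these reproduce precisely the \L ukasiewicz truncations at $0$ and at $1$, invoking the range observation $f_\phi(\bar a)\in[0,1]$ to resolve the truncations without case splits. Once all cases of the recursive definition are checked, the induction closes and the identity follows.
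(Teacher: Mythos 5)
Your proof is correct and is essentially the argument the paper intends: the paper's own "proof" merely defers to Theorem 5.8 of the cited Cintula--Jankovec--Noguera paper, which carries out exactly this structural induction showing that the shift $t\mapsto t-1$ intertwines the MV-operations with their truncated group counterparts. Your additional care in noting that $\neg$ and $\oplus$ must be rewritten over the generating set $\{\otimes,\to,\land,\lor,0,1\}$ (since $\delta$ is not defined on them directly) is a point the paper glosses over, and your case computations all check out.
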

\begin{proof}
    As in the proof of \cite[Theorem 5.8]{Cintula-Jankovec-Noguera:SuperabelianLogics}.
\end{proof}

As in the case of  $\tau$ defined in Section \ref{s:reduction}, the mapping $\delta$ extends to a mapping on open formulas of $\langMV$: for $\Phi$ of $\langMV$ define $\edelta(\Phi)$ by replacing each occurrence of an atom $\phi=\psi$ with the atom $\delta(\phi)=\delta(\psi)$.

\begin{lemma} \label{l:edelta}
    Let $\Phi$ be an open formula in $\langMV$ with $n$ variables and let $a_1,\dots,a_n \in [0,1]$.
    Then
    $$\standardL \vDash \Phi(a_1,\dots,a_n) \text{ if and only if } \R_{-1} \vDash \edelta(\Phi) (a_1-1,\dots,a_n-1).$$
\end{lemma}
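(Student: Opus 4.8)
The plan is to lift the termwise identity in Lemma~\ref{l:delta} to the boolean structure of open formulas by induction, exactly mirroring the argument used for $\etau$ in Lemma~\ref{l:etau}. The key observation is that $\edelta$, like $\etau$, leaves the boolean skeleton of $\Phi$ untouched (it only rewrites atoms $\phi=\psi$ into $\delta(\phi)=\delta(\psi)$), so the inductive steps for $\boneg$, $\boland$ and $\boor$ are purely formal once the atomic case is settled.

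First I would dispatch the base case of an atom $\phi=\psi$. Fix $a_1,\dots,a_n \in [0,1]$ and write $b_i = a_i - 1$, so that $b_i \in [-1,0]$ is a legitimate element of $\R_{-1}$. By Lemma~\ref{l:delta} we have
\begin{equation*}
f_{\delta(\phi)}(b_1,\dots,b_n) + 1 = f_\phi(a_1,\dots,a_n),\qquad
f_{\delta(\psi)}(b_1,\dots,b_n) + 1 = f_\psi(a_1,\dots,a_n).
\end{equation*}
Since adding the constant $1$ is an injection on $\mathbb{R}$, we get
\begin{equation*}
f_\phi(a_1,\dots,a_n) = f_\psi(a_1,\dots,a_n)
\iff
f_{\delta(\phi)}(b_1,\dots,b_n) = f_{\delta(\psi)}(b_1,\dots,b_n),
\end{equation*}
which is precisely $\standardL \vDash (\phi=\psi)(a_1,\dots,a_n)$ iff $\R_{-1} \vDash \edelta(\phi=\psi)(b_1,\dots,b_n)$. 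This is the only place where the content of Lemma~\ref{l:delta} is used.

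For the inductive step I would assume the equivalence for open formulas $\Psi$ and $\Delta$ and treat the three connectives. Because satisfaction of a boolean combination in either structure is determined by the satisfaction of its immediate subformulas, and because $\edelta(\boneg\Psi)=\boneg\edelta(\Psi)$, $\edelta(\Psi\boland\Delta)=\edelta(\Psi)\boland\edelta(\Delta)$, and $\edelta(\Psi\boor\Delta)=\edelta(\Psi)\boor\edelta(\Delta)$, each step reduces immediately to the induction hypothesis. I expect no genuine obstacle here: the only substantive ingredient is the injectivity of the shift $x\mapsto x-1$ in the atomic case, and the rest is a routine structural induction identical in form to Lemma~\ref{l:etau}. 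The sole point requiring a word of care is that the shifted values $b_i=a_i-1$ remain in a range where $\delta$ behaves correctly, but since $\delta$ is defined over all of $\R_{-1}$ and Lemma~\ref{l:delta} already holds for arbitrary $a_i\in[0,1]$, this is automatic.
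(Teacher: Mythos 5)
Your proof is correct and follows exactly the route the paper intends: the paper's proof of this lemma is just the remark ``as in the proof of Lemma~\ref{l:etau}'', and your argument---atomic case from Lemma~\ref{l:delta} plus injectivity of the shift, then a routine induction on the boolean skeleton, which $\edelta$ leaves untouched---is precisely that argument transplanted from $\etau$ to $\edelta$.
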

\begin{proof}
    As in the proof of Lemma~\ref{l:etau}.
\end{proof}

Our aim is to compose $\delta$ with $\tau$. This is not immediately possible, since $\tau$ is defined only on $\formAb$ (lacking the constant $-1)$. First, therefore, the definition of $\tau$ will be extended to its variant $\tau_q$.
Let $\phi \in \formpAb$ and let $q$ be a variable not used in $\phi$. The function  $\tau_q(\phi)$ is defined recursively as follows:
\begin{enumerate}
    \item $\tau_q(x)=(x \lor q) \land \frac{1}{2} $ for $x \in {\mathrm Var}(\phi)$;
    \item $\tau_q(0)=\frac{1}{2}$;
    \item $\tau_q(-1)=q$;
    \item $\tau_q(-\phi)=\neg(\tau_q(\phi))$;
    \item $\tau_q(\phi \circ \psi)=\tau_q(\phi) \circ \tau_q(\psi)$ for $\circ \in \{\lor,\land\}$;
    \item $\tau_q(\phi+\psi)=(\tau_q(\phi) \oplus \tau_q(\psi)) \otimes (\frac{1}{2} \oplus (\tau_q(\phi) \otimes \tau_q(\psi)))$.
\end{enumerate}

We now look into the composition $\tau_q \circ \delta$ which maps $\formMV$ into $\formMVhalf$. The following lemma shows that, as far as the semantics provided by $\standardL$ is concerned, the composition of these two translations can be expressed by a function that increases the length of a term polynomially, rather than  exponentially as one might expect, given the behaviour of $\tau$ discussed in Section \ref{s:reduction}.

\begin{lemma} \label{l:sigma}
    Let $\phi \in \formMV$ and let $q$ be a variable not among $\mathrm{Var}(\phi)$.
    The functions $f_{\tau_q(\delta(\phi))}$ and $f_{\sigma_q(\phi)}$ coincide on the domain of $\standardL$, where the mapping $\sigma_q$ on $\formMV$ is defined recursively as follows:
\setlength{\columnsep}{0pt}
\begin{multicols}{2}
  \begin{enumerate}
    \item $\sigma_q(x)=(x \lor q) \land \frac{1}{2}$ for $x \in {\mathrm Var}(\phi)$;
    \item $\sigma_q(1)=\frac{1}{2}$;
    \item $\sigma_q(0)=q$;
    \item $\sigma_q(a \rightarrow b)=((\sigma_q(a) \rightarrow \sigma_q(b)) \otimes \frac{1}{2}) \land \frac{1}{2}$;
    \item $\sigma_q(x \otimes y)=((\sigma_q(x) \oplus \sigma_q(y)) \otimes \frac{1}{2}) \lor q$;
    \item $\sigma_q(\phi \circ \xi) = \sigma_q(\phi)  \circ \sigma_q(\xi)$ for $\circ \in \{\land,\lor\}$.
\end{enumerate}
\end{multicols}
Moreover, for any term $\psi$ in $\langMV$, $\sharp(\sigma_q(\psi))$ is polynomial in $\sharp(\psi)$.
\end{lemma}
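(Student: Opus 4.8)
The plan is to prove the functional identity by structural induction on $\phi$, regarding $\langMV$-terms as built from variables, the constants $0,1$, and the connectives $\to,\otimes,\land,\lor$ (every $\langMV$-term reduces to these, using $\neg\xi=\xi\to 0$ and $\xi\oplus\chi=\neg\xi\to\chi$, the only primitives for which $\delta$ and $\sigma_q$ supply clauses). The crucial feature is that the induction cannot be run on the identity alone: it must carry a side invariant pinning the range of the common value. I would prove simultaneously that, for every assignment into $\standardL$ with $q\leq\frac{1}{2}$ (which is all the reduction needs, since there $q$ is the image of $-1$, namely $\frac{1}{2}-\frac{1}{2^{M+k+1}}$),
\[
f_{\tau_q(\delta(\phi))}=f_{\sigma_q(\phi)}\qquad\text{and}\qquad q\leq f_{\sigma_q(\phi)}\leq\tfrac{1}{2}.
\]
The band $[q,\frac{1}{2}]$ is exactly what forces the Łukasiewicz truncations hidden inside the $\tau_q$-clauses to collapse onto the shorter $\sigma_q$-clauses; without it the two functions genuinely differ.

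The base and lattice cases are routine. For the constants, $\tau_q(\delta(0))=\tau_q(-1)=q=\sigma_q(0)$ and $\tau_q(\delta(1))=\tau_q(0)=\frac{1}{2}=\sigma_q(1)$. For a variable $x$ one computes $f_{\tau_q(\delta(x))}$ from $((( x \lor q) \land \frac{1}{2}) \lor q) \land \frac{1}{2}$, which equals $(x\lor q)\land\frac{1}{2}=f_{\sigma_q(x)}$ by distributivity of the bounded lattice $[0,1]$ together with $q\leq (x\lor q)\land\frac{1}{2}$; each of these values lies in $[q,\frac{1}{2}]$. For $\circ\in\{\land,\lor\}$ the step is purely formal: $\delta$, $\tau_q$ and $\sigma_q$ all commute with $\circ$, so the induction hypothesis gives $f_{\tau_q(\delta(\phi\circ\xi))}=f_{\sigma_q(\phi)}\circ f_{\sigma_q(\xi)}=f_{\sigma_q(\phi\circ\xi)}$, and the value, being one of the two subvalues, stays in the band.

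The heart of the argument, and the step I expect to be the \emph{main obstacle}, is the treatment of $\to$ and $\otimes$, where the defining clause of $\tau_q$ introduces the full additive composite $(\alpha\oplus\beta)\otimes(\frac{1}{2}\oplus(\alpha\otimes\beta))$. I would first record, by a case split on whether $\alpha+\beta\leq 1$, that on $[0,1]^2$ this composite computes $\min(1,\max(0,\alpha+\beta-\frac{1}{2}))$. For $\otimes$, since $\delta(\phi\otimes\psi)=(\delta(\phi)+\delta(\psi))\lor -1$, the induction hypothesis gives $f_{\tau_q(\delta(\phi\otimes\psi))}=\min(1,\max(0,\alpha+\beta-\frac{1}{2}))\lor q$ with $\alpha=f_{\sigma_q(\phi)}$, $\beta=f_{\sigma_q(\psi)}$; the invariant yields $\alpha,\beta\leq\frac{1}{2}$, hence $\alpha+\beta\leq 1$, so the outer $\min(1,\cdot)$ is vacuous and the value equals $((\alpha\oplus\beta)\otimes\frac{1}{2})\lor q=f_{\sigma_q(\phi\otimes\psi)}$, again in $[q,\frac{1}{2}]$. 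For $\to$, using $\delta(a\to b)=(\delta(b)-\delta(a))\land 0$ and $\tau_q(-\chi)=\neg\tau_q(\chi)$, the composite evaluates the additive operation on $\beta$ and $1-\alpha$, and the band $\alpha,\beta\in[q,\frac{1}{2}]$ confines the relevant argument $\frac{1}{2}+\beta-\alpha$ to $[q,1-q]\subseteq[0,1]$, so both truncations are vacuous and the value reduces to $\frac{1}{2}-\max(0,\alpha-\beta)$, which is precisely $((\alpha\to\beta)\otimes\frac{1}{2})\land\frac{1}{2}=f_{\sigma_q(a\to b)}$ and lies in $[q,\frac{1}{2}]$. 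This is where the invariant is indispensable: without the bound $\alpha+\beta\leq 1$ the clamping inside $\tau_q$ would not agree with the shorter $\sigma_q$-clauses.

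Finally, for the size bound I would argue by a separate, easy induction that $\sharp(\sigma_q(\psi))\leq c_0\,\sharp(\psi)$ for a small constant $c_0$: each clause of $\sigma_q$ wraps the recursively produced subterms in only a constant number of additional constants and connectives (the clauses for $a\to b$ and $x\otimes y$ each add two leaves, the lattice clauses add none, and the atomic clauses produce at most three leaves). Hence $\sharp(\sigma_q(\psi))$ is linear, a fortiori polynomial, in $\sharp(\psi)$. This is exactly the gain over forming $\tau_q\circ\delta$ syntactically, which would duplicate both summands at every additive step and blow up exponentially, as already observed for $\tau$ in Section~\ref{s:reduction}.
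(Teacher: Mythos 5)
Your proof is correct and follows essentially the same route as the paper's: a structural induction on $\phi$ in which the additive gadget $(\alpha\oplus\beta)\otimes(\frac{1}{2}\oplus(\alpha\otimes\beta))$ coming from the $\tau_q$-clause for $+$ is evaluated (as $\min(1,\max(0,\alpha+\beta-\frac{1}{2}))$) and then collapsed onto the shorter $\sigma_q$-clauses using the fact that $\sigma_q$-values never exceed $\frac{1}{2}$; the linear size bound is handled the same way. The one substantive difference is that you make the hypothesis $q\leq\frac{1}{2}$ and the invariant $q\leq f_{\sigma_q(\phi)}\leq\frac{1}{2}$ explicit, whereas the paper asserts $f_{\sigma_q(\gamma)}\leq\frac{1}{2}$ for \emph{every} assignment. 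Your version is the defensible one: for assignments with $q>\frac{1}{2}$ the claimed identity actually fails (e.g.\ for $\phi=0\otimes 0$ and $q=0.8$ one gets $f_{\tau_q(\delta(\phi))}=1$ but $f_{\sigma_q(\phi)}=0.8$), so the restriction $q\leq\frac{1}{2}$ is genuinely needed --- and harmless, since every application assigns $q$ the value $\frac{1}{2}-\frac{1}{2^{M+k+1}}$. Your handling of $\rightarrow$ (both truncations vacuous on the band $[q,\frac{1}{2}]$) replaces the paper's two-case split on whether $\neg\alpha+\beta\geq 1$, but both computations check out.
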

\begin{proof}
    By induction on term  depth.
    \begin{itemize}
    \item Base case (depth 0): By definition $\sigma_q(x)=(x \lor q) \land \frac{1}{2}=\tau_q(\delta(x))$, $\sigma_q(1)=\frac{1}{2}=\tau_q(\delta(1))$ and $\sigma_q(0)=q=\tau_q(\delta(0))$.
    \end{itemize}
    Now assume that $f_{\sigma_q(\psi)}=f_{\tau_q(\delta(\psi))}$  and  $f_{\sigma_q(\gamma)}=f_{\tau_q(\delta(\gamma))}$.
    \begin{itemize}
\item Clearly, for $\phi=\psi \circ \gamma$ we have $f_{\tau_q(\delta(\psi \circ \gamma))}=f_{\sigma_q(\psi \circ\gamma)}$ for $\circ \in \{\lor,\land\}$.
  \item For $\phi=\psi \rightarrow \gamma$ we have $$f_{\tau_q(\delta(\psi \rightarrow \gamma))}=f_{\tau_q((\delta(\gamma)-\delta(\psi)) \land 0)}=f_{(( \neg \tau_q(\delta(\psi)) \oplus \tau_q(\delta(\gamma))) \otimes (\frac{1}{2} \oplus  (\neg \tau_q(\delta(\psi)) \otimes \tau_q(\delta(\gamma))))) \land \frac{1}{2}}.$$
   By the induction hypothesis this is equivalent to
   \begin{align*}
       &f_{((\neg \sigma_q(\psi) \oplus \sigma_q(\gamma)) \otimes (\frac{1}{2} \oplus (\neg \sigma_q(\psi) \otimes \sigma_q(\gamma)))) \land \frac{1}{2}}\\=& \max(0, f_{\neg \sigma_q(\psi) \oplus \sigma_q(\gamma)}+f_{\frac{1}{2} \oplus (\neg \sigma_q(\psi) \otimes \sigma_q(\gamma))}-1) \land \frac{1}{2} .
   \end{align*}
   Let us distinguish two cases. 
   \begin{enumerate}
       \item 
   First assume $(f_{\neg\sigma_q(\psi)}+f_{\sigma_q(\gamma)})(a_1,\dots,a_n) \geq 1$.
   Thus,
   $$f_{\neg \sigma_q(\psi) \oplus \sigma_q(\gamma)}(a_1,\dots,a_n)=1$$ and consequently
\begin{align*}
    &\max(0, (f_{\neg \sigma_q(\psi) \oplus \sigma_q(\gamma)} +f_{\frac{1}{2} \oplus (\neg \sigma_q(\psi) \otimes \sigma_q(\gamma))})(a_1,\dots,a_n)-1) \land \frac{1}{2}\\ =&\max(0,f_{\frac{1}{2} \oplus (\neg \sigma_q(\psi) \otimes \sigma_q(\gamma))}(a_1,\dots,a_n)) \land \frac{1}{2}=
    \frac{1}{2}\\ =&f_{((\sigma_q(\psi) \rightarrow \sigma_q(\gamma)) \otimes \frac{1}{2}) \land \frac{1}{2}}(a_1,\dots,a_n)=f_{\sigma_q(\psi \rightarrow \gamma)}(a_1,\dots,a_n).
\end{align*}
\item For the other case assume $(f_{\neg\sigma_q(\psi)}+f_{\sigma_q(\gamma)})(a_1,\dots,a_n) \leq 1$. Then we have 
$$f_{((\neg \sigma_q(\psi) \oplus \sigma_q(\gamma)) \otimes (\frac{1}{2} \oplus (\neg \sigma_q(\psi) \otimes \sigma_q(\gamma)))) \land \frac{1}{2}}=f_{((\neg \sigma_q(\psi) \oplus \sigma_q(\gamma)) \otimes \frac{1}{2}) \land \frac{1}{2}}=f_{\sigma_q(\psi \rightarrow \gamma)}.$$ This completes the second case.
\end{enumerate}
\item For $\phi=\psi \otimes \gamma$ we have $$f_{\tau_q(\delta(\psi \otimes \gamma))}=f_{\tau_q((\delta(\psi) + \delta(\gamma)) \lor -1)}=f_{((\tau_q(\delta(\psi)) \oplus \tau_q(\delta(\gamma))) \otimes (\frac{1}{2} \oplus (\tau_q(\delta(\psi)) \otimes \tau_q(\delta(\gamma))))) \lor q}.$$
By induction hypothesis this is equal to 
$$f_{((\sigma_q(\psi) \oplus \sigma_q(\gamma)) \otimes (\frac{1}{2} \oplus (\sigma_q(\psi) \otimes \sigma_q(\gamma)))) \lor q}.$$
Since $f_{\sigma_q(\gamma)}(a_1,\dots,a_n)\leq \frac{1}{2}$ for each $a_1,\dots,a_n \in \standardL$ and for each $\gamma \in \formMV$, we have for each $a_1,\dots,a_n \in \standardL$ that $f_{\sigma_q(\psi) \otimes \sigma_q(\gamma)}(a_1,\dots,a_n)=0.$
Thus, $$f_{\tau_q(\delta(\psi \otimes \gamma))}=f_{((\sigma_q(\psi) \oplus \sigma_q(\gamma)) \otimes \frac{1}{2} ) \lor q}.$$
This completes the second case and also the  proof. \qedhere
    \end{itemize} 
    \end{proof}
We extend definitions of $\tau_q$ and $\sigma_q$ to open formulas.
For $\Phi$ of $\langpAb$ define $\etau_q(\Phi)$ by replacing each occurrence of an atom $\phi=\psi$ with the atom $\tau_q(\phi)=\tau_q(\psi).$ 
Similarly for $\Phi$ of $\langMV$ define  $\esigma_q(\Phi)$ by replacing each occurrence of an atom $\phi=\psi$ with the atom $\sigma_q(\phi)=\sigma_q(\psi)$.

\begin{corollary} \label{c:esigma}
    Let $\Psi$ be open formula of $\langMV$ with $n$ variables and let $q\not\in\mathrm{Var}(\Psi)$. 
    Let $a_1,\dots a_n \in [0,1]$.
    Then $$\standardLhalf \vDash\etau_q(\edelta(\Psi))(a_1,\dots,a_n) \text{ if and only if } \standardLhalf \vDash \esigma_q(\Psi)(a_1,\dots,a_n).$$
\end{corollary}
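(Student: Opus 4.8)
The plan is to reduce the claim to the term-level identity already established in Lemma~\ref{l:sigma} and then lift it across the boolean structure, in the manner of Lemma~\ref{l:etau}. First I would record the purely syntactic observation that each of the operators $\edelta$, $\etau_q$ and $\esigma_q$ acts only on the atoms of a formula and leaves the $\{\boneg,\boland,\boor\}$-skeleton untouched. Consequently $\etau_q(\edelta(\Psi))$ and $\esigma_q(\Psi)$ have exactly the same boolean structure as $\Psi$: each atom $\phi=\psi$ of $\Psi$ is replaced by $\tau_q(\delta(\phi))=\tau_q(\delta(\psi))$ in the former and by $\sigma_q(\phi)=\sigma_q(\psi)$ in the latter. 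It therefore suffices to compare the two formulas atom by atom, under a fixed assignment, and then propagate through the connectives.

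For the atomic base case, consider an atom $\phi=\psi$ of $\Psi$ and fix an assignment $v$ to the variables occurring in $\esigma_q(\Psi)$ and $\etau_q(\edelta(\Psi))$. By Lemma~\ref{l:sigma}, the functions $f_{\tau_q(\delta(\phi))}$ and $f_{\sigma_q(\phi)}$ coincide on the domain of $\standardL$, and likewise $f_{\tau_q(\delta(\psi))}$ and $f_{\sigma_q(\psi)}$ coincide. Hence $f_{\tau_q(\delta(\phi))}(v)=f_{\tau_q(\delta(\psi))}(v)$ holds if and only if $f_{\sigma_q(\phi)}(v)=f_{\sigma_q(\psi)}(v)$; that is, the atom of $\etau_q(\edelta(\Psi))$ coming from $\phi=\psi$ is satisfied by $v$ in $\standardLhalf$ exactly when the corresponding atom of $\esigma_q(\Psi)$ is. Thus corresponding atoms are equisatisfied.

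With the base case in hand I would finish by induction on the boolean structure of $\Psi$, precisely as in Lemma~\ref{l:etau}: since $\etau_q\circ\edelta$ and $\esigma_q$ both commute with $\boneg$, $\boland$ and $\boor$, and corresponding atoms receive the same truth value under $v$, a routine induction shows $\etau_q(\edelta(\Psi))$ and $\esigma_q(\Psi)$ receive the same truth value in $\standardLhalf$. I do not expect a genuine obstacle here, since Lemma~\ref{l:sigma} already carries all the computational content; the only point meriting care is the bookkeeping of the auxiliary variable $q$, which is not among the $n$ variables of $\Psi$ yet occurs in both translations. The assignment must therefore be understood as ranging over $q$ as well, so that the atom-level equivalence can be read off directly from the coincidence of functions in Lemma~\ref{l:sigma}.
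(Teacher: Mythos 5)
Your proposal is correct and matches the paper's intent: the corollary is stated without proof precisely because it follows from Lemma~\ref{l:sigma} by the atom-by-atom comparison and the routine induction on boolean structure that you describe (the same pattern as Lemma~\ref{l:etau}). Your remark about the assignment also covering the auxiliary variable $q$ is the right bookkeeping point and introduces no difficulty.
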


Notice that if $\phi$ is a $\langMV$-term and $v$ an assignment in $\R_{-1}$ such that $v(x_i) \in [-1,0]$ for each $x_i \in \mathrm{Var}(\phi)$, then for each subterm $\psi$ of $\phi$,  $v(\delta(\psi)) \in [-1,0]$. 
Using this observation, we obtain the following variant of Lemma~\ref{l:etau}, now featuring the translation $\etau_q$ in the role of $\etau$, and moreover avoiding the assumption regarding the depth of terms in the open first-order formula. 
The proof of this lemma is a variant of those of Lemma~\ref{l:tau} and Lemma~\ref{l:etau} with only minor modifications. 

\begin{lemma} \label{l:etau-modified}
Let $M$ and $k$ be fixed nonnegative integers,  $\Phi(x_1, \dots, x_n)$  an open formula in the language $\langMV$ and  $q\not\in\mathrm{Var}(\Phi)$.
For any assignment $v$ in $\R_{-1}$ where $v(x_i) = a_i \in [-1,0]$, let $v'$ be an assignment in $\standardLhalf$ with
\setlength{\abovedisplayskip}{3pt}
\setlength{\abovedisplayshortskip}{3pt}
\begin{equation*} \label{eq:evaluation_v}
    v'(x_i) = r_{M,k}(a_i) \text{ for each } i \leq n  \text{ and } v'(q) = \frac{1}{2}-\frac{1}{2^{M+k+1}}.
\end{equation*}
Then  $v$  satisfies the formula $\edelta(\Phi)$ in the algebra $\R_{-1}$ if and only if $v'$ satisfies the formula $\etau_q(\edelta(\Phi))$ in $\standardLhalf$. 

This is equivalent to the following. For any assignment $v$ in $\standardLhalf$ where $v(x_i) = b_i \in [\frac{1}{2}-\frac{1}{2^{M+k+1}},\frac{1}{2}]$
let $v'$ be an assignment in $\R_{-1}$ with $v'(x_i)=r_{M,k}^{-1}(b_i)$. Assume that $v(q) = \frac{1}{2}-\frac{1}{2^{M+k+1}}$. Then  $v$  satisfies the formula $\etau_q(\edelta(\Phi))$ in the algebra $\standardLhalf$ if and only if $v'$ satisfies the formula $\edelta(\Phi)$ in $\R_{-1}$. 
\end{lemma}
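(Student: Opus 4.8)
The plan is to follow the template of Lemma~\ref{l:tau} and Lemma~\ref{l:etau}, adapting them to the translation $\tau_q$ and to terms of the form $\delta(\phi)$. First I would prove a pointwise statement analogous to Lemma~\ref{l:tau}: for every subterm $\chi$ of $\delta(\phi)$ and every assignment $v$ in $\R_{-1}$ with $v(x_i)=a_i\in[-1,0]$,
\[
f_{\tau_q(\chi)}(r_{M,k}(a_1),\dots,r_{M,k}(a_n)) = r_{M,k}(f_\chi(a_1,\dots,a_n)),
\]
where the variable $q$ is interpreted by $r_{M,k}(-1)=\frac{1}{2}-\frac{1}{2^{M+k+1}}$. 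Once this is in hand, the passage to open formulas is exactly as in Lemma~\ref{l:etau}: $\etau_q$ and $\edelta$ leave the boolean structure untouched, so an atom-by-atom argument using the injectivity of $r_{M,k}$ settles the atomic case, and the boolean connectives are handled verbatim. The second, equivalent formulation of the statement then follows at once from the fact that $r_{M,k}$ is a bijection of $[-1,0]$ onto $[\frac{1}{2}-\frac{1}{2^{M+k+1}},\frac{1}{2}]$.

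The pointwise induction differs from Lemma~\ref{l:tau} only at the base cases and in the way the admissible range is controlled. For the constant $-1$ we have $\tau_q(-1)=q$, interpreted by $r_{M,k}(-1)$, so that base case holds by definition; the constant $0$ and the negation, lattice, and addition clauses are transcriptions of those in Lemma~\ref{l:tau}. For a variable $x$ the clause is now $\tau_q(x)=(x\lor q)\land\frac{1}{2}$; since $a_i\in[-1,0]$ gives $r_{M,k}(a_i)\in[q,\frac{1}{2}]$ and $r_{M,k}$ is order preserving, both clamps are no-ops and $f_{\tau_q(x)}$ evaluates to $r_{M,k}(a_i)$, as required. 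These clamps are precisely what make the converse direction work without invoking the bounded-assignment lemma: they force any value of a variable into $[q,\frac{1}{2}]$, i.e.\ into $[-1,0]$ after $r_{M,k}^{-1}$.

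The point that replaces the depth hypothesis of Lemma~\ref{l:tau} is the observation recorded just before the statement: for a $\langMV$-term $\phi$ and $v$ with variables in $[-1,0]$, every $\delta$-subterm $\delta(\psi)$ evaluates into $[-1,0]$. Consequently $f_{\tau_q(\delta(\psi))}$ lands in the narrow band $[\frac{1}{2}-\frac{1}{2^{M+k+1}},\frac{1}{2}]$, and negated $\delta$-subterms in $[\frac{1}{2},\frac{1}{2}+\frac{1}{2^{M+k+1}}]$. Because $\delta$ inserts a clamp ($\lor-1$ or $\land 0$) above every addition node, each $+$ that $\tau_q$ must translate has both arguments equal to a $\delta$-image or a negation of one; hence the two summands of $f_{\tau_q(\cdot)}+f_{\tau_q(\cdot)}$ lie near $\frac{1}{2}$ and their value minus $\frac{1}{2}$ stays inside $[0,1]$. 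This is exactly the safe regime in which the gadget $(\cdot\oplus\cdot)\otimes(\frac{1}{2}\oplus(\cdot\otimes\cdot))$ computes $f+g-\frac{1}{2}=r_{M,k}(\,\cdot+\cdot\,)$, so no depth bound, and hence no shrinking margin, is needed.

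I expect the addition clause to be the main obstacle, for two reasons. First, when one argument is a negated $\delta$-image the sum $f_{\tau_q(\cdot)}+f_{\tau_q(\cdot)}$ can slightly exceed $1$, so one lands in the saturating case of the $\oplus/\otimes$ analysis; one must verify, exactly as in the two-case split of Lemma~\ref{l:tau}, that the final value is nonetheless $r_{M,k}$ of the true unclamped sum. Second, the true sum $f_{\delta(\psi)}+f_{\delta(\gamma)}$ can fall below $-1$ (down to $-2$), so the pointwise identity must be checked for the raw $+$-node before the subsequent $\lor-1$ or $\land 0$ is applied; here one uses that $r_{M,k}$ commutes with $\min$ and $\max$ and that $q=r_{M,k}(-1)$, so that $\tau_q(\cdot)\lor q$ and $\tau_q(\cdot)\land\frac{1}{2}$ faithfully realise the clamps $\lor-1$ and $\land 0$. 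Everything else is a routine transcription of the corresponding cases in Lemmas~\ref{l:tau} and~\ref{l:etau}.
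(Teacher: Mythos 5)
Your proposal matches the paper's intended argument: the paper gives no detailed proof of this lemma, stating only that it is a variant of Lemmas~\ref{l:tau} and~\ref{l:etau} with minor modifications, the depth hypothesis being replaced by the observation (recorded just before the statement) that every $\delta$-subterm evaluates into $[-1,0]$ --- exactly the role you assign to it. Your analysis of the addition clause and of the clamps $\lor\, q$ and $\land\, \frac{1}{2}$ supplies precisely the ``minor modifications'' the paper leaves implicit (with the one caveat that in the degenerate case $M=k=0$ the raw $+$-node identity for $\delta(\psi)+\delta(\gamma)$ can fail and must be argued jointly with the subsequent $\lor\,{-1}$ clamp, as you in effect note), so the approach is essentially the same.
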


\begin{theorem}[A translation of the existential theory of $\standardL$ to itself]
    Let $\Phi(x_1,\dots,x_n)$ be an open formula of $\langMV$.
   Assume that the variables $\{z_1,\dots,z_{M+k+1},q\}$ do not occur in $\Phi$. 
    Let $M,k \in \N$.
    Let $\zeta (\Phi)$ be obtained from $\Phi$ by applying $\esigma_q$ on $\Phi$ and then replacing all occurrences of the constant $\frac{1}{2}$ with the variable $z_1$.
    Let $S(\Phi,M,k)$ be the following open formula in $\langMV$:
\begin{equation*}
    \zeta (\Phi) \boland     (z_1=\neg z_1)
    \boland (q =\neg (z_1 \oplus z_{M+k+1}) )\boland   \foo_{i \leq M+k} (z_{i+1} \oplus z_{i+1}=z_{i}). 
\end{equation*}
 Then $\Phi$ has a satisfying assignment in $\standardL$
if and only if $S(\Phi,M,k)$ has a satisfying assignment in $\standardL$.
 Moreover, $\sharp(S(\Phi,M,k))$ is polynomial in $\sharp(\Phi)$.   
\end{theorem}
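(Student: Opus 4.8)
The plan is to read $S(\Phi,M,k)$ as the composition of the two reductions already in hand and to verify correctness link by link. Writing $\bar a-1$ for $(a_1-1,\dots,a_n-1)$ and $r_{M,k}(\bar a-1)$ for its componentwise image, I would establish, for $\bar a\in[0,1]^n$,
\begin{align*}
\standardL\models\Phi(\bar a)
&\iff \R_{-1}\models\edelta(\Phi)(\bar a-1)\\
&\iff \standardLhalf\models\etau_q(\edelta(\Phi))\bigl(r_{M,k}(\bar a-1)\bigr)\\
&\iff \standardLhalf\models\esigma_q(\Phi)\bigl(r_{M,k}(\bar a-1)\bigr),
\end{align*}
the three links being Lemma~\ref{l:edelta}, Lemma~\ref{l:etau-modified} (with $q$ interpreted as $\frac{1}{2}-\frac{1}{2^{M+k+1}}$), and Corollary~\ref{c:esigma} respectively. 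The formula $\esigma_q(\Phi)$ is a formula of $\langMVhalf$; passing through $\esigma_q$ rather than the literal term $\etau_q(\edelta(\cdot))$ is exactly what keeps the size under control, since by Lemma~\ref{l:sigma} the map $\esigma_q$ grows the length only polynomially. It then remains to trade the constant $\frac{1}{2}$ for the fresh variable $z_1$ and to pin $z_1$ and $q$ down by the bonus equations via Lemma~\ref{l:bonus_variables}, so that $\zeta(\Phi)$ evaluated in $\standardL$ agrees with $\esigma_q(\Phi)$ evaluated in $\standardLhalf$.

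For the forward implication I would start from a satisfying assignment $\bar a\in[0,1]^n$ of $\Phi$ in $\standardL$. Lemma~\ref{l:edelta} yields $\R_{-1}\models\edelta(\Phi)(\bar a-1)$ with each $a_i-1\in[-1,0]$, which is precisely the window required by Lemma~\ref{l:etau-modified}; applying the latter with $v'(x_i)=r_{M,k}(a_i-1)$ and $v'(q)=\frac{1}{2}-\frac{1}{2^{M+k+1}}$, and then Corollary~\ref{c:esigma}, gives a satisfying assignment of $\esigma_q(\Phi)$ in $\standardLhalf$. I would extend it to $\standardL$ by setting $v'(z_i)=\frac{1}{2^i}$ for $i\le M+k+1$. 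These values satisfy the bonus equations by Lemma~\ref{l:bonus_variables} and in particular give $v'(z_1)=\frac{1}{2}$, so that $\zeta(\Phi)$ (which is $\esigma_q(\Phi)$ with $\frac{1}{2}$ rewritten as $z_1$) takes in $\standardL$ the same value it had in $\standardLhalf$. Hence $S(\Phi,M,k)$ is satisfied in $\standardL$.

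For the converse I would take $v$ satisfying $S(\Phi,M,k)$ in $\standardL$. The conjoined bonus equations force $v(z_1)=\frac{1}{2}$ and $v(q)=\frac{1}{2}-\frac{1}{2^{M+k+1}}$ by Lemma~\ref{l:bonus_variables}, so that $v$ satisfies $\esigma_q(\Phi)$, equivalently $\etau_q(\edelta(\Phi))$ by Corollary~\ref{c:esigma}, in $\standardLhalf$. Here lies the one genuinely delicate point: the converse half of Lemma~\ref{l:etau-modified} demands $v(x_i)\in[\frac{1}{2}-\frac{1}{2^{M+k+1}},\frac{1}{2}]$, whereas $v(x_i)$ is a priori an arbitrary element of $[0,1]$. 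I would resolve this by observing that in $\esigma_q(\Phi)$ every occurrence of a variable $x_i$ is guarded by the subterm $\sigma_q(x_i)=(x_i\lor q)\land\frac{1}{2}$, whose value always lies in $[\frac{1}{2}-\frac{1}{2^{M+k+1}},\frac{1}{2}]$. Hence replacing $v$ by the clamped assignment $v^\star$ given by $v^\star(x_i)=(v(x_i)\lor q)\land\frac{1}{2}$ and $v^\star(q)=v(q)$ leaves the value of each such guarded subterm, and thus of the whole formula, unchanged, while now $v^\star(x_i)$ falls in the required window. Lemma~\ref{l:etau-modified} then returns $v''(x_i)=r_{M,k}^{-1}(v^\star(x_i))\in[-1,0]$ satisfying $\edelta(\Phi)$ in $\R_{-1}$, and Lemma~\ref{l:edelta} finally delivers $\standardL\models\Phi(v''(x_1)+1,\dots,v''(x_n)+1)$, as required.

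Finally, the size bound is pure bookkeeping: $\sharp(\esigma_q(\Phi))$ is polynomial in $\sharp(\Phi)$ by Lemma~\ref{l:sigma}, the substitution $\frac{1}{2}\mapsto z_1$ does not alter length, and the bonus conjunction contributes $O(M+k)$ atoms, which is polynomial once $M$ and $k$ are taken polynomial in $\sharp(\Phi)$, as in the intended reduction. I expect the main obstacle to be neither the correctness of the individual links, which the earlier lemmas supply, nor the counting, but precisely the clamping step in the converse direction: one must check that no variable of $\Phi$ survives in $\esigma_q(\Phi)$ outside its guard $(\,\cdot\,\lor q)\land\frac{1}{2}$, so that the unconstrained values produced by a satisfying assignment of $S(\Phi,M,k)$ may be projected into the admissible window around $\frac{1}{2}$ without affecting satisfaction.
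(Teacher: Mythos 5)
Your proposal is correct and follows essentially the same route as the paper's own proof: the same chain of Lemma~\ref{l:edelta}, Lemma~\ref{l:etau-modified}, Corollary~\ref{c:esigma} and Lemma~\ref{l:bonus_variables} in both directions, with the size bound delegated to Lemma~\ref{l:sigma}. Your explicit clamping $v^\star(x_i)=(v(x_i)\lor q)\land\frac{1}{2}$ in the converse direction is exactly the ``without loss of generality'' step the paper justifies by noting that every occurrence of $x_i$ in $\zeta(\Phi)$ sits inside the guard $(x_i\lor q)\land z_1$.
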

\begin{proof}
Assume first that $\Phi(x_1,\dots, x_n)$ has a satisfying assignment $v(x_i)=a_i$ with $a_i \in \standardL$. By Lemma~\ref{l:edelta} $\edelta(\Phi)$ has a satisfying assignment $x_i \mapsto a_i-1$ with $(a_i-1) \in [-1,0]$.

By Lemma~\ref{l:etau-modified} $\etau_q(\edelta(\Phi))$ has a satisfying assignment $x_i \mapsto r_{M,k}(a_i-1)$ and $q \mapsto \frac{1}{2}-\frac{1}{2^{M+k+1}}$ in $\standardLhalf$. By Lemma~\ref{c:esigma} $\esigma_q(\Phi)$ is satisfied by the very same assignment. 
Consequently, $\zeta(\Phi)$ has satisfying assignment $x_i \mapsto r_{M,k}(a_i-1), q \mapsto \frac{1}{2}-\frac{1}{2^{M+k+1}}$ and $z_1 \mapsto \frac{1}{2}$ in $\standardL$.

By Lemma~\ref{l:bonus_variables}, the formula $(z_1=\neg z_1) \boland (q =\neg (z_1 \oplus z_{M+k+1}) )\boland  \foo_{i \leq M+k} (z_{i+1} \oplus z_{i+1}=z_{i}) $ has the unique satisfying assignment in $\standardL$, which
    sends $z_i$ to $\nicefrac{1}{2^i}$ for each $i \leq M+k$ and $q$ to   $\frac{1}{2}-\frac{1}{2^{M+k+1}}$.   

Conversely, assume $S(\Phi,M,k)$ has a satisfying assignment $v$ in $\standardL$. The conjunct 
$$(z_1=\neg z_1) \boland (q =\neg (z_1 \oplus z_{M+k+1}) )\boland  \foo_{i \leq M+k} (z_{i+1} \oplus z_{i+1}=z_{i}) $$
forces by Lemma~\ref{l:bonus_variables} that $v(z_1)=\frac{1}{2}$ and $v(q)=\frac{1}{2}-\frac{1}{2^{M+k+1}}$. Observe that by the definition of $\zeta$, every occurrence of a variable $x_i$ in $\zeta(\Phi)$ appears strictly within the term $(x_i \lor q) \land z_1$ for each $i \leq n$. Therefore, we can assume without loss of generality that $\frac{1}{2}-\frac{1}{2^{M+k+1}}=v(q)\leq v(x_i)\leq v(z_1)=\frac{1}{2}$ for all $i \leq n$.
Since $v$ satisfies $S(\Phi,M,k)$, it must satisfy $\zeta(\Phi)$. By definition of $\zeta$, this means the formula $\esigma_q(\Phi)$ is satisfied by the assignment $v$ in $\standardLhalf$. By Lemma~\ref{c:esigma} the assignment $v$ also satisfies $\etau_q(\edelta(\Phi))$ in $\standardLhalf$.
We can now apply Lemma~\ref{l:etau-modified}. 

Since $\frac{1}{2}-\frac{1}{2^{M+k+1}} \leq v(x_i) \leq \frac{1}{2}$ and $v(q)=\frac{1}{2}-\frac{1}{2^{M+k+1}}$, by Lemma~\ref{l:etau-modified} the assignment
$v'(x_i)= r_{M,k}^{-1}(v(x_i)) $ 
satisfies $\edelta(\Phi)$ in $\R_{-1}$.
Finally, applying Lemma~\ref{l:edelta}, the assignment $v''(x_i)=v'(x_i)+1$ satisfies the original formula $\Phi$ in $\standardL$. This completes the proof.
\end{proof}

The last theorem can be viewed as providing  a way of testing the validity of existential 
sentences in $\standardL$ on a subinterval of its domain (rather than the full domain), provided the value $\nicefrac{1}{2}$ belongs to this interval. The statement relies on the well-known fact that rational values in $[0,1]$ are implicitly definable by $\langMV$-terms, and moreover, the size of the defining term is polynomial in the length of the positional representation of the rational number in question.       
\medskip

\section*{Acknowledgements}
Both authors were supported by ERDF-Project \emph{Knowledge in the Age of Distrust}, No.\ CZ.02.01.01/00/23\_025/0008711.
The second author was also supported from the project SVV-2025-260837.

\bibliography{mfl-edited}

\end{document}